\documentclass[10pt,a4paper]{amsart}
\usepackage{amssymb}
\usepackage{hyperref}

\usepackage{graphicx,amssymb,amsfonts,epsfig,amsthm,a4,amsmath,url}
\usepackage[latin1]{inputenc}

\usepackage{amsmath, amsthm, amssymb, verbatim}
\usepackage{graphicx,amssymb,amsfonts,epsfig,amsthm,a4,amsmath,url,graphicx,enumerate}
\usepackage[latin1]{inputenc}
\usepackage{tikz}
\usepackage{textcomp}
\usetikzlibrary{matrix}

\newtheorem{thm}{Theorem}[section]
\newtheorem{cor}[thm]{Corollary}
\newtheorem{lem}[thm]{Lemma}

\newtheorem{prop}[thm]{Proposition}
\theoremstyle{definition}

\theoremstyle{remark}
\newtheorem{rem}[thm]{Remark}

\numberwithin{equation}{section}

\begin{document}
\title[Residually finite groups and box spaces]{From the geometry of box spaces to the geometry and measured couplings of groups}
\author[Das]{Kajal Das}
\address{UMPA UMR 5669, ENS Lyon\\  69364 Lyon cedex 7\\FRANCE}
\email{kajal.das@ens-lyon.fr}

\maketitle
\textbf{Abstract:} In this paper, we prove that if two `box spaces' of two residually finite groups are coarsely equivalent, then the two 
groups are `uniform measured equivalent' (UME). More generally, we prove that if there is a coarse embedding of one box space into another box 
space, then there exists a `uniform measured equivalent embedding' (UME-embedding) of the first group into the second one. This is a reinforcement of the 
easier fact that a coarse equivalence (resp.\ a coarse embedding)  between the box spaces gives rise to a coarse equivalence (resp.\ a coarse
embedding) between the groups.
 
We deduce new invariants that distinguish box spaces up to coarse embedding and coarse equivalence.
In particular,  we obtain that the expanders coming from $SL_n(\mathbb{Z})$ can not be coarsely embedded inside the expanders of 
$SL_m(\mathbb{Z})$, where $n>m$ and $n,m\geq 3$. Moreover, we obtain a countable class of residually groups which are mutually 
coarse-equivalent but any of their box spaces are not coarse-equivalent.

\textbf{Mathematics Subject Classification (2010):} 20E26, 20F69, 20F65, 37A15, 37A20, 51F99.

\textbf{Key terms:}: residually finite groups, box spaces, coarse equivalence, coarse embedding, measured equivalence, uniform measured equivalence (UME), 
uniform measured equivalent embedding (UME-embedding), Gromov-Hausdorff convergence,  marked groups.

\section{Introduction}

A finitely generated group is called \textit{residually finite} if there exists a decreasing sequence of finite index normal subgroups 
$\{G_n\}_{n\in\mathbb{N}}$ whose intersection is trivial, i.e.,  $G=G_1\trianglerighteq G_2\trianglerighteq\cdots\trianglerighteq G_n\trianglerighteq\cdots
\trianglerighteq{1}$. A \textit{box space} of a finitely generated residually finite group is defined as the disjoint union of 
$\sqcup_{n\in\mathbb{N}}G/G_n$. One can give a metric on the box space as follows: First, we fix a finite generating subset of $G$. 
This generating subset gives a metric on $G$ (which is the Cayley graph of $G$ with respect to the generating subset). 
Then, this metric induces a natural metric on each of $G/G_n$. We then assign a metric on the union such that the distance between two distinct copies 
$G/G_n$ and $G/G_{n+k}$  tends to infinity as $n\to \infty$. We refer to \cite{Kh} for a rigorous description of such a metric. 
Sometimes the box space of a group $G$ corresponding to a sequence of normal subgroups $\{G_n\}_{n\in\mathbb{N}}$ as above is denoted by $\square_{G_n}G$. 
The concept of box spaces plays an important role in the context of coarse Baum-Connes conjecture and maximal 
Baum-Connes conjecture (see \cite{Yu1}, \cite{CWY}). Also, the expander graphs obtained from Property (T) groups (see \cite{Mar}) form a significant class
of box spaces. 

 We recall that a map $f:X\rightarrow Y$ between two metric spaces is called $(\rho_{+}, \rho_{-})$-\textit{coarse embedding} if there exist two
 non-decreasing functions 
 $\rho_{+}$, $\rho_{-}:[0,\infty)\rightarrow [0,\infty)$ tending to $+\infty$ such that $\rho_{-}(d_X (x_1, x_2))\leq
d_Y (f (x_1), f (x_2))\leq \rho_{+}(d_X(x_1, x_2))$ for every $x_1,x_2\in X$; it is called a $(\rho_{+}, \rho_{-}, c)$-\textit{coarse equivalence} 
if moreover $Y$ is the $c$-neighbourhood of $f(X)$ for some $c\geq0$. The study of coarse embedding of a metric space into another metric space 
 mainly started from the the proof of `coarse Baum-Connes conjecture' by Yu in \cite{Yu1} for 
 every discrete metric space with `bounded geometry' which are coarsely embeddable inside a Hilbert space and this result implies 
 the Novikov conjecture for all closed manifolds whose fundamental group with the word-length metric with respect to some generating subset coarsely 
 embeds into a Hilbert space. We should mention that Yu's idea was implicit in Gromov's intution of approaching `Novikov's conjecture' in \cite{Gr1}.
 Yu also introduced `Property A' for metric spaces with bounded geometry and showed that the spaces with Property A coarsely embed into a Hilbert space. 
 This class of metric spaces includes Gromov's word hyperbolic groups, discrete subgroups of connected Lie groups and amenable groups.
 It was once conjectured that any `bounded geometry uniformly discrete' metric space can be coarsely embedded inside a Hilbert space. 
 Then, Gromov came up with a counter-example and proved that expanders can not be coarsely embedded inside a Hilbert space \cite{Gr3}, which
 follows immediately from an inequality of Matou\u{s}ek \cite{Mat}. 
 
There is a strong interaction between the analytic properties of residually finite groups and the coarse-geometric properties of the corresponding
box spaces. Let $G$ be a residually finite group and $\{G_n\}_{n\in\mathbb{N}}$ be a sequence of normal subgroups as above. 
Here is a general picture on the interactions between the residually finite groups and their box spaces.  

\bigskip
 
1. 
 \begin{center}
\begin{tikzpicture}[every node/.style={midway}]

\matrix[column sep={10em,between origins},
        row sep={2em}] at (0,0)
{ \node(A)   { $G$ is amenable}  ; & \node(B) {$\square_{G_i}G$ has Property $A$}; \\};

\draw[<->, double] (A) -- (B) node[anchor=south] {};

\end{tikzpicture}
\end{center}

2. \begin{center}
\begin{tikzpicture}[every node/.style={midway}]

\matrix[column sep={18em,between origins},
        row sep={2em}] at (0,0)
{ \node(A)   { $G$ has Haggerup}  ; & \node(B) {$\square_{G_i}G$ has a coarse fibred embedding into a Hilbert space}; \\
  \node(C) { }; & \node (D){$\square_{G_i}G$ has a coarse embedding into a Hilbert space}; \\};

\draw[<->, double] (A) -- (B) node[anchor=south] {};

\draw[<-, double] (B) -- (D) node[anchor=south] {};
\end{tikzpicture}
\end{center}
 
3. \begin{center}
\begin{tikzpicture}[every node/.style={midway}]

\matrix[column sep={15em,between origins},
        row sep={2em}] at (0,0)
{ \node(A)   { G has Property $\tau$ w.r.t. $\{G_i\}_{i\in\mathbb{N}}$}  ; & \node(B) {$\square_{G_i}G$ is an expander}; \\
  \node(C) {G has Property (T)}; & \node (D){$\square_{G_i}G$ has Geometric Property (T)}; \\};
\draw[->, double] (C) -- (A) node[anchor=east]  {};
\draw[<->, double] (A) -- (B) node[anchor=south] {};
\draw[<->, double] (C) -- (D) node[anchor=east]  {};
\draw[<-, double] (B) -- (D) node[anchor=south] {};
\end{tikzpicture}
\end{center}

We refer the readers to \cite{CWW},\cite{WY}, \cite{LZ}, \cite{Mar} for the above implications. However, Arzhantseva, Guentner and Spakula \cite{AGS} 
give the first example of a metric space 
(with bounded geometry) without Yu's Property A which coarsely embeds into a Hilbert space. This is constructed as a box space of $F_2$, the free group 
with two generators. Since any expander sequence can not be coarsely embedded inside a Hilbert space, Gromov asked the following natural question: 
Given a metric space with bounded geometry which does not embed coarsely into a  Hilbert space, does it necessarily contain a `weakly embedded' expander? 
Recently,  Arzhantseva and Tessera  in \cite{AT} answer this question negatively. This counter example also comes as a box space of a 
finitely generated residually finite group. We should remark that two box spaces of a group can have different coarse-geometric properties. 
For example, by Selberg's theorem (see \cite{LZ}) there exists a box space of $F_2$ which is an expander. On the other hand, using \cite{AGS}, we can construct another 
chain of normal subgroups of $F_2$ so that the corresponding box space coarsely embeds into a Hilbert space. Therefore, these two box spaces are not 
coarsely equivalent.

Our motivation in this article is to find obstructions for two box spaces to be coarsely equivalent (or to coarsely embed into one another).
It is not very difficult to prove that if there is a coarse equivalence (resp. coarse embedding) between two box spaces, then there is a
coarse equivalence (resp. coarse embedding) between the corresponding groups. We have proved this result in Section 2 (see \ref{coarseequi}). 
This result also appears in a general form for `marked groups' in \cite{KV}. We push this result one step further. We prove that
\textit{if two box spaces are coarsely equivalent, then the corresponding groups are `uniform measured equivalent'}(UME). More generally, we prove that if 
there is a coarse embedding of a box space into another box space, then there is a `UME-embedding' of the first group into the second one. 
\textit{Uniform Measured Equivalence} is a sub-equivalence relation of `Measure Equivalence' on finitely generated groups introduced by Shalom in \cite{Sh}.
Two countable discrete groups 
$\Gamma$ and $\Lambda$ are called \textit{Measured Equivalent}(ME) if they have commuting measure preserving free actions on a Borel space 
$(X,\mu)$ with finite measure Borel fundamental domains, say $X_\Gamma$ and $X_\Lambda$ respectively. The space $(X,\mu)$ is called a 
\textit{`measured coupling space'} for the groups $\Gamma$ and $\Lambda$. If, moreover, the action of an element of one group, say $\Gamma$, 
on the fundamental domain of another group, say $X_\Lambda$,  is covered by finitely many $\Lambda$-translates of $X_\Lambda$, then these two
groups are called UME. We refer the readers to Subsection \ref{2.1} for the definition of UME-embedding. 
Now, we formally state the main theorem of our paper.

\begin{thm}\label{mainthm} 
Suppose $G$ and $H$ are two finitely generated residually finite groups with two decreasing sequences of finite index normal subgroups 
${1}\trianglelefteq \cdots \trianglelefteq G_n\trianglelefteq\cdots\trianglelefteq G_1=G$ and 
${1}\trianglelefteq \cdots \trianglelefteq H_n\trianglelefteq\cdots\trianglelefteq H_1=H$. 
If the box space of $G$ with respect to $\{G_n\}_{n\in\mathbb{N}}$, denoted by $\square_{G_n} G$, and the box space of $H$ with respect to
$\{H_n\}_{n\in\mathbb{N}}$, denoted by $\square_{H_n} H$ are coarsely equivalent, then $G$ and $H$ are 
UME. Similarly, we prove that if there is a coarse embedding of 
$\square_{G_n} G$ into $\square_{H_n} H$, then there exists a `UME-embedding' of $G$ into $H$.
\end{thm}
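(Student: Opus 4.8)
The plan is to reduce the coarse equivalence of the box spaces to a \emph{uniformly controlled} family of coarse equivalences between the finite quotients, and then to manufacture the measured coupling as an ultralimit (via a Loeb measure) of the corresponding finite couplings; the key point is that the bounded distortion of $f$ turns into uniform bounds on the relevant cocycles, which survive the limit and give precisely the uniformity condition in UME. As a first step, since in a box space the distance between distinct components tends to $\infty$, a coarsely Lipschitz map sends, for all large $n$, the connected graph $G/G_n$ into a single component $H/H_{m(n)}$ of $\square_{H_n} H$; the resulting $m\colon\mathbf N\to\mathbf N$ satisfies $m(n)\to\infty$ (because $f|_{G/G_n}$ is at most $K$-to-one, so $|H/H_{m(n)}|\to\infty$), and after discarding finitely many components and passing to a subsequence we may assume $m$ strictly increasing. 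Writing $f_n:=f|_{G/G_n}\colon G/G_n\to H/H_{m(n)}$ and using a coarse inverse $\bar f$, one checks that each $f_n$ is a $(\rho_+,\rho_-)$-coarse embedding, a $(\rho_+,\rho_-,c)$-coarse equivalence when $f$ is, with the \emph{same} data for all $n$, that $f_n$ is at most $K$-to-one and (in the equivalence case) $c$-cobounded, and that $|G/G_n|$ and $|H/H_{m(n)}|$ differ by at most a fixed multiplicative constant, all with constants independent of $n$.

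Next I would fix a non-principal ultrafilter $\omega$ on $\mathbf N$ and set $X:=\prod_\omega G/G_n$ with the Loeb measure $\mu$ of the normalized counting measures; replacing the $\sigma$-algebra by a countably generated invariant sub-$\sigma$-algebra we may take $(X,\mu)$ standard. Let $G$ act on $X$ diagonally by \emph{right} multiplication, $\xi\cdot g=(\xi_n\bar g)_n$ (with $\bar g$ the image of $g$ in $G/G_n$): this action is measure preserving and essentially free, since for $g\neq e$ one has $\xi_n\bar g\neq\xi_n$ for every large $n$ because $\bigcap_n G_n=\{1\}$. Define $\delta\colon X\times G\to H$ by letting $\delta(\xi,g)$ be the $\omega$-limit of the unique lift to $H$ of $f_n(\xi_n)^{-1}f_n(\xi_n\bar g)\in H/H_{m(n)}$ lying in the ball of radius $\rho_+(|g|)$; this lift exists and is unique for large $n$ because $d_{H/H_{m(n)}}\!\big(f_n(\xi_n),f_n(\xi_n\bar g)\big)\le\rho_+\big(d_{G/G_n}(\xi_n,\xi_n\bar g)\big)=\rho_+(|\bar g|)\le\rho_+(|g|)$ and the projection $H\to H/H_{m(n)}$ is isometric on that ball for $n$ large. (It is essential that $G$ acts on the \emph{right}: for the left action the displacement $d(\bar g\xi_n,\xi_n)$ is the length of a conjugate of $\bar g$, which is not bounded in terms of $|g|$.) One verifies that $\delta$ is a cocycle and that, by construction, $\xi\mapsto\delta(\xi,g)$ takes values in the \emph{finite} set $B_H(\rho_+(|g|))$. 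Forming $\Omega:=X\times H$ with $m:=\mu\times(\text{counting})$, the right $G$-action $(\xi,h)\cdot g=(\xi\cdot g,\,h\,\delta(\xi,g))$ and the left $H$-action $h_0\cdot(\xi,h)=(\xi,h_0h)$, these actions commute, are measure preserving and free, and $X_H:=X\times\{e\}$ is a Borel fundamental domain for $H$ with $m(X_H)=1$. Since $g\cdot X_H=\{(\xi\cdot g,\delta(\xi,g)):\xi\in X\}$ and $\delta(\cdot,g)$ is finitely valued, $g\cdot X_H$ is covered by finitely many $H$-translates of $X_H$; together with $m(X_H)<\infty$ this is exactly the defining property of a UME-embedding of $G$ into $H$.

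When $f$ is a coarse equivalence we additionally have $|G/G_n|\asymp|H/H_{m(n)}|$ and $c$-coboundedness of the $f_n$. Coboundedness shows that for $\mu$-a.e.\ $\xi$ the set $\{\delta(\xi,g):g\in G\}$ is $c$-dense in $H$: given $h\in H$, pick $\eta_n\in G/G_n$ with $d(f_n(\eta_n),f_n(\xi_n)\bar h)\le c$; then $d(\eta_n,\xi_n)\le\rho_-^{-1}(c+|h|)$, so the elements $\xi_n^{-1}\eta_n$ lift to a single $g\in B_G(\rho_-^{-1}(c+|h|))$ with $\delta(\xi,g)$ within $c$ of $h$. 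Hence $D:=\bigcup_{h_0\in B_H(c)}h_0\cdot X_H$ satisfies $G\cdot D=\Omega$ and $m(D)<\infty$, and — using that $f_n$ is at most $K$-to-one together with the lower control $\rho_-$ — the $G$-action on $\Omega$ is smooth with a fundamental domain $X_G\subseteq D$, $m(X_G)\le m(D)<\infty$, on which the cocycle for the residual $H$-action takes, for each $h_0\in H$, values in a finite subset $B_G(R(h_0))$ of $G$. Thus $\Omega$ is a genuine measured coupling with finite-measure fundamental domains on both sides and with both uniformity conditions, i.e.\ $G$ and $H$ are UME. (Equivalently one can run the previous step for $\bar f$ to obtain a skew product $Y\times G$ with $Y=\prod_\omega H/H_{m(n)}$ and, after reindexing so that $\bar f_n\circ f_n$ and $f_n\circ\bar f_n$ are uniformly close to the identity maps, identify $X\times H\cong Y\times G$ as $(G\times H)$-couplings.)

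The main obstacle is the equivalence case: showing that the $G$-action on $\Omega$ admits a \emph{finite-measure} fundamental domain — equivalently, the identification $X\times H\cong Y\times G$ — is exactly where the coarse-equivalence hypothesis, the cardinality comparison $|G/G_n|\asymp|H/H_{m(n)}|$, and the ``bounded-to-one'' and ``cobounded'' properties of the $f_n$ (not merely their bi-Lipschitz-type control) must be invoked, and this requires the most care. The other delicate points are the Loeb-measure bookkeeping needed to keep $\Omega$ standard and $\sigma$-finite, the verification that the limiting actions are essentially free, and the left-versus-right subtlety that makes the cocycle $\delta$ bounded in the first place.
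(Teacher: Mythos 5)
Your proposal is correct in its essentials, but it follows a genuinely different route from the paper. The paper stays entirely inside the Gromov--Shalom topological-coupling framework: it takes the finite sets $X_n$ of (basepoint-preserving, injective) $(\rho_+,\rho_-,c)$-coarse equivalences $G/G_n\to H/H_n\times M$, extracts a Gromov--Hausdorff limit $X$ sitting as a $G$-invariant compact subset of the space $Y$ of coarse equivalences $G\to H\times M$, builds the invariant measure on $X$ as a weak* limit of the uniform measures on $X_n$ pushed forward by almost-$G$-equivariant $\epsilon$-isometries, and sets $Z=HX$; the UME conditions then come for free because both fundamental domains are compact-open in $Z$. You instead take an ultraproduct of the quotients $G/G_n$ themselves (essentially the profinite completion $\hat G$ with Haar measure, realized via a Loeb measure so that the $\omega$-limits defining $\delta$ are measurable), read off an explicit finitely-valued cocycle $\delta\colon X\times G\to H$ from the maps $f_n$, and form the skew product $X\times H$. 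What your route buys is that the two hardest steps of the paper become nearly automatic: $G$-invariance of the measure is just invariance of counting measure under right translation (no almost-equivariant almost-isometries needed), and the UME covering conditions are transparent from the bounds $\rho_-(|g|)\le|\delta(\xi,g)|\le\rho_+(|g|)$. What the paper's route buys is that the existence of both fundamental domains is immediate (compact-open sets in a proper topological coupling), whereas you must construct $X_G$ by hand; your sketch of this is right but should be made explicit: the lower bound $\rho_-(|g|)\le|\delta(\xi,g)|$ shows the $G$-action on $X\times H$ is wandering (each orbit meets $X\times F$ in boundedly many points for finite $F\subseteq H$), which yields a Borel fundamental domain in both the embedding and the equivalence case, and in the equivalence case coboundedness lets you choose it inside $D=X\times B_H(c)$, of finite measure. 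The remaining points you flag are real but routine: to keep the coupling standard, pass to the $G$-invariant countably generated sub-$\sigma$-algebra generated by the internal sets $\{\xi:\delta(\xi,g)=h\}$ together with the pullback of the Borel $\sigma$-algebra of $\hat G$ under the natural equivariant map $\prod_\omega G/G_n\to\hat G$; the latter also guarantees essential freeness of the factor action, since the right $G$-action on $(\hat G,\mathrm{Haar})$ is free because $\bigcap_n G_n=\{1\}$. With these two details written out, your argument is complete.
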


As a consequence of the above theorem, we obtain the following invariants that distinguish box spaces up to coarse embedding and coarse equivalence.

\begin{cor}\label{affinecohdim}
 Suppose there is a coarse embedding of a box space of $G$ into a box space of $H$ as above. Then,
 \begin{itemize}
  \item[(i)] if there exists a metrically proper affine action of $H$ on an $L^p$-space, then there exists a metrically proper affine action of $G$ on an 
  $L^p$ space as well, where $1\leq p\leq\infty$;
  \item[(ii)] the $R$-cohomological dimension of $G$ is less than equal to the $R$-cohomological dimension of $H$, we denote it by $cd_R G \leq cd_R H$,
  where $R$ is a commutative ring containing $\mathbb{Q}$. 
 \end{itemize}
\end{cor}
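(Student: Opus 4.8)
The plan is to derive both parts of the corollary from Theorem~\ref{mainthm}, exploiting the fact that Uniform Measured Equivalence (and UME-embedding) is well-behaved enough to transport the two properties in question. Concretely, by Theorem~\ref{mainthm} a coarse embedding of $\square_{G_n} G$ into $\square_{H_n} H$ yields a UME-embedding of $G$ into $H$. So it suffices to prove the following two transfer statements at the level of groups: (a) if $H$ admits a metrically proper affine isometric action on an $L^p$-space and $G$ UME-embeds into $H$, then $G$ admits a metrically proper affine isometric action on an $L^p$-space; (b) if $G$ UME-embeds into $H$, then $cd_R G \leq cd_R H$ for $R$ a commutative ring containing $\mathbb{Q}$.

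For part (i), I would follow the co-induction construction familiar from measure equivalence theory (Gaboriau, and in the uniform setting Shalom). Let $(\Omega,\mu)$ be a measured coupling witnessing the UME-embedding of $G$ into $H$, with $H$-fundamental domain $X_H$ of finite measure and the $G$-action on $X_H$ covered by finitely many $H$-translates. Given a metrically proper affine isometric $H$-action on an $L^p$-space $E$, one forms the space of (equivariant classes of) $L^p$-maps $\Omega \to E$, i.e.\ roughly $L^p(X_H, E)$ with a suitable cocycle twist; this is again an $L^p$-space, and $G$ acts on it affinely and isometrically via the $G$-action on $\Omega$ combined with the original $H$-action on $E$. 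The uniformity hypothesis — finitely many $H$-translates cover each $g$-translate of $X_H$ — is exactly what guarantees that the displacement of the $G$-action is controlled by (a bounded number of copies of) the displacement of the $H$-action, so that properness in $H$ (with respect to the word metric, comparing with the $L^\infty$-geometry of the coupling) passes to properness in $G$. I would be careful about the endpoint cases $p=1,\infty$, where ``$L^p$-space'' and ``metrically proper affine action'' should be interpreted so that the co-induction still produces a space of the same type; this is standard but should be remarked.

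For part (ii), I would invoke the known monotonicity of cohomological dimension under measure equivalence and its embedded/uniform analogue. The key input is Gaboriau's result (and Sauer's refinement) that $L^2$- and more generally $\ell^p$-Betti numbers and cohomological dimension over a ring containing $\mathbb{Q}$ are monotone under ME-embedding/coupling: if $G$ measure-equivalently embeds into $H$ then $cd_R G \leq cd_R H$ for $R \supseteq \mathbb{Q}$, because the coupling lets one induce up free resolutions and the finite-measure fundamental domain controls the length. Since a UME-embedding is in particular an ME-embedding (one forgets the uniformity), the inequality $cd_R G \leq cd_R H$ follows directly. I would state this as a citation to the measure-equivalence literature rather than reproving it.

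The main obstacle I anticipate is part (i): making precise the co-induced $L^p$-space and, above all, verifying that \emph{metric properness} — not merely properness in the weak/topological sense — is preserved. This requires relating the word metric on $G$ to the ``$L^\infty$-geometry'' of the coupling space (this is precisely where UME rather than plain ME is needed), and then checking that a subset of $G$ on which the co-induced action has bounded displacement must be finite. I expect this to hinge on the definition of UME-embedding recorded in Subsection~\ref{2.1}, so the proof will really be: quote Theorem~\ref{mainthm}, then carry out the co-induction and a displacement estimate using that definition, and finally cite the ME-monotonicity of $cd_R$ for the second bullet.
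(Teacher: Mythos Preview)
Your outline is essentially the paper's own proof: invoke Theorem~\ref{mainthm} to get a UME-embedding, then for (i) co-induce the affine $L^p$-action through the coupling cocycle and use the uniform control on the cocycle to push properness from $H$ to $G$, and for (ii) appeal to an existing monotonicity result for $cd_R$. The co-induction you describe is exactly what the paper does; the paper just records the cocycle bound explicitly as $\rho_-(|g|)\le |c(g,x)|\le \rho_+(|g|)$, which is available because the coupling built in Theorem~\ref{mainthm} consists of $(\rho_+,\rho_-)$-coarse maps, and this lower bound is what yields properness of the induced cocycle.

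The one place where your plan diverges slightly is part (ii). You propose to forget the uniformity and cite Gaboriau/Sauer for monotonicity of $cd_R$ under ``ME-embedding''. Be careful here: in the paper's definition of UME-embedding the $G$-fundamental domain need \emph{not} have finite measure, so a UME-embedding is not automatically an ME-coupling, and Gaboriau's proportionality results are stated for genuine ME. The paper handles this instead by invoking Remark~\ref{fingp}: the coupling produced in Theorem~\ref{mainthm} comes with compact-open fundamental domains satisfying $X_{H\times M}\subseteq X_G$, and with that containment one can run verbatim the cohomological-induction argument of Shalom \cite[Theorem~1.5]{Sh} (Shalom's amenability hypothesis was only used to produce the invariant measure, which here is supplied by Theorem~\ref{mainthm}). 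So your strategy is right, but the correct citation is Shalom's argument together with Remark~\ref{fingp}, not a bare ME-monotonicity statement.
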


By a deep result of Gaboriau (\cite{Ga}, Th\'{e}or\`{e}me 6.3), if two countable groups $\Gamma$ and $\Lambda$ are measured equivalent, 
then $\beta_i^{(2)}(\Gamma)= c \beta_i^{(2)}(\Lambda)$ for all $i\in\mathbb{N}$, where $c$ is a constant independent of $i$. Therefore, using Theorem 
\ref{mainthm}, we obtain the following corollary. 
\begin{cor}\label{l2betti}
   If two box spaces of $G$ and $H$ are coarsely equivalent, then the $l^2$-betti numbers of $G$ and $H$ must be proportional. 
\end{cor}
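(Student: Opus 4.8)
The plan is to combine the main theorem of the paper with the theorem of Gaboriau cited just before the statement. First I would invoke Theorem \ref{mainthm}: since the two box spaces $\square_{G_n}G$ and $\square_{H_n}H$ are coarsely equivalent, the groups $G$ and $H$ are uniform measured equivalent, and in particular they are measured equivalent (UME is a sub-equivalence relation of ME, as recalled in the introduction). So we may fix a measured coupling $(X,\mu)$ for $G$ and $H$ with finite-measure fundamental domains.

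Next I would apply Gaboriau's result (\cite{Ga}, Th\'eor\`eme 6.3): for measured equivalent countable groups $\Gamma$ and $\Lambda$ one has $\beta_i^{(2)}(\Gamma) = c\,\beta_i^{(2)}(\Lambda)$ for every $i\in\mathbb{N}$, with a single proportionality constant $c>0$ independent of $i$ (indeed $c$ is the ratio of the measures of the two fundamental domains in the coupling). Applying this to $\Gamma=G$, $\Lambda=H$ yields exactly that the sequences $\bigl(\beta_i^{(2)}(G)\bigr)_{i\in\mathbb{N}}$ and $\bigl(\beta_i^{(2)}(H)\bigr)_{i\in\mathbb{N}}$ are proportional, which is the assertion of the corollary.

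There is essentially no obstacle here beyond correctly quoting the two black boxes: the entire content is that coarse equivalence of box spaces is strong enough to force measured equivalence (the substance of Theorem \ref{mainthm}), after which Gaboriau's proportionality principle does the rest. The only minor point worth a sentence is to note that $G$ and $H$ are finitely generated, hence countable, so that Gaboriau's theorem is applicable as stated; and to observe that the proportionality constant is strictly positive (being a ratio of finite positive measures), so the statement is non-vacuous even when some $\beta_i^{(2)}$ vanish. I would write this up as a two or three line deduction.
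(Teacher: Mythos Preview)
Your proposal is correct and matches the paper's own argument exactly: the paper deduces the corollary directly by combining Theorem~\ref{mainthm} (coarse equivalence of box spaces $\Rightarrow$ UME, hence ME) with Gaboriau's proportionality theorem for $\ell^2$-Betti numbers. Your additional remarks on countability and positivity of the constant are fine but not needed for the paper's level of detail.
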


A special case of this corollary can be found in \cite{El} (see Proposition 2.1 and Proposition 3.4).

\subsection{Acknowledgements}
I am very grateful to my adviser Romain Tessera for suggesting me this problem (Theorem \ref{mainthm}) and helping me in the course of 
proving  the theorem. I am indebted to Ana Khukhro  and Alain Valette for inviting me at University of Neuchatel and having an illuminating discussion
on a similar project they were working on. I would also like to thank Damien Gaboriau for some very useful discussions and Damian Sawicki for pointing 
out some of the typos.

\subsection{Organization}
In Section 2, we introduce our necessary definitions, notations and abreviations. In Section 3, we prove that if there is a coarse equivalence 
between two box spaces, then the corresponding groups are coarsely equivalent. We prove
our main theorem \ref{mainthm} in Section 4. We divide this section into three subsections: In Subsection \ref{subsection4.1}, we construct a 
`topological coupling space' for the groups; in Subsection \ref{subsection4.2}, we give a non-zero measure on the topological coupling space
which is invariant under the actions of both groups, i.e., we make the topological coupling space into a measured coupling space; 
finally in Subsection \ref{subsection4.3} we prove our main theorem \ref{mainthm}. In Section 5, 
we discuss some applications of Proposition \ref{coarseequi} and Theorem \ref{mainthm}.

\section{Preliminaries: some definitions, notations and abreviations:}

\bigskip

\subsection{}\label{2.1} In \cite{Gr1}, Gromov first formulates a topological criterion for quasi-isometry and introduces measured equivalence as a measure theoretic
counterpart of quasi-isometry. In \cite{Sh}, Shalom slightly modifies the `topological coupling space' constructed by Gromov and also
gives a topological criterion for coarse embedding. We will mainly follow Shalom's construction of `topological coupling' in our proof. 
For countable groups $\Lambda$ and $\Gamma$, there exists a coarse embedding $\phi:\Lambda\rightarrow\Gamma$ if and only if there exists a locally compact 
space $X$ on which both $\Lambda$ and $\Gamma$ act properly and continuously with a compact-open fundamental domain $X_\Gamma$ of $\Gamma$ in $X$ 
and the actions of $\Gamma$ and $\Lambda$ commute. Replacing $\Gamma$ with a direct product $\Gamma\times M$ for some finite group $M$, we can assume 
that there exists a compact-open fundamental domain $X_\Gamma$ for $\Gamma$ satisfying $X_\Gamma\subseteq X_\Lambda$, where $X_\Lambda$ is a Borel
fundamental domain of $\Lambda$. Moreover,  $\phi$ will be a coarse 
equivalence between $\Gamma$ and $\Lambda$ if and only if after replacing $\Gamma$ with a direct product $\Gamma\times M$ for some finite group M, 
there exists a topological space $X$ with the following three properties: (i) both $\Lambda$ and $\Gamma$ act continuously, properly and freely on $X$;
(ii) there exist fundamental domains $X_\Gamma$ and $X_\Lambda$, for $\Gamma$ and $\Lambda$ respectively,  which are compact and open;
(iii) $X_\Gamma\subseteq X_\Lambda$ (\cite{Sh}, Theorem 2.1.2, p. 129). The space $X$ is called a \textit{topological coupling space} for $\Gamma$ and $\Lambda$.

In a similar way, as UME is defined in the previous section, we define \textit{uniform measured equivalence embedding} (UME-embedding) as follows:
We say that \textit{there is a UME-embedding of $\Lambda$ inside $\Gamma$} if there exists a Borel space $X$ with a non-zero mesure $\mu$ and measure 
preserving free commuting actions of $\Lambda$ and $\Gamma$ on $X$ such that there exists a finite measure fundamental domain $X_\Gamma$ of $\Gamma$ in $X$
and the action on $X_\Gamma$ by an element $\lambda$ from $\Lambda$ can be covered by finitely many $\Gamma$-translates of $X_\Gamma$
(the measure of $X_\Lambda$ may not necessarily be finite or the action on $X_\Lambda$ by an element $\gamma$ of $\Gamma$ may not necessarily be covered 
by finitely many $\lambda$-translates of $X_\Lambda$). 

Given an ME-coupling $(X,\mu)$ between $\Gamma$ and $\Lambda$, with $X_\Gamma$ and $X_\Lambda$ being the fundamental domains of $\Gamma$ and $\Lambda$ 
respectively, we define the cocyle $\alpha:\Gamma\times X_\Lambda\rightarrow \Lambda$  
(resp.\ $\beta:\Lambda\times X_\Gamma\rightarrow \Gamma$) by the rule: for all $x\in X_{\Lambda}$, and all 
$\gamma\in \Gamma$, $\alpha(\gamma, x)\gamma x \in X_\Lambda$ (and symmetrically for $\beta$).

\bigskip

\subsection{}\label{2.2} In this article, we denote the finite generating subset of $G$ by $S_G$, where $S_G$ is symmetric, i.e., $S_G=S_G^{-1}$.  
The metric defined by $S_G$ on $G$ will be denoted by $d_G$. We use $d_{G/G_n}$ for the induced metric on $G/G_n$.
We denote the identity elements of  $G$ and $G/G_n$ by $1_G$ and $1_{G/G_n}$, respectively. 
Sometimes, when the group is clear from the context, we use $1$ for the identity element of the group. 
The ball of radius $r$ around an element $g$ in $G$ will be  denoted by $B_r^G(g)$. We use similar type of notations for $H$ and $H/H_n$ as well. 
 
\bigskip

\subsection{}\label{2.3}  It is easy to observe that if we have a $(\rho_{+},\rho_{-},c)$-coarse equivalence (resp. $(\rho_{+}, \rho_{-})$-coarse embedding) 
of $\square_{G_n} G$ into $\square_{H_n} H$, 
then passing to a subsequence and reindexing the components, both for $\square_{G_n} G$ and $\square_{H_n} H$, 
we can assume that there is a $(\rho_{+},\rho_{-},c)$-coarse equivalence (resp. $(\rho_{+}, \rho_{-})$-coarse embedding) 
$f_n:=f|_{ G/G_n}$ from $G/G_n$ to  $H/H_n$ for all $n$.  Moreover, we can assume that 
$f(1_{G/G_n})=1_{H/H_n}$ for all $n$. 
We should remark that for finitely generated groups coarse equivalence coincides with quasi-isometry \cite{GK}. 

\bigskip  

\subsection{}\label{2.4} We now define Gromov-Hausdorff convergence (GH convergence) of compact metric spaces. We first need to define some terms before going into the 
definition of GH convergence. Suppose $X$ and $Y$ are two metric spaces and $f: X\rightarrow Y$ is a map. We define the `distortion' of $f$ by the 
following quantity: $$dis f := sup_{\{x_1,x_2\}} | d_Y(f(x_1),f(x_2))-d_X(x_1,x_2) |.$$ $f:X\rightarrow Y$ is called an $\epsilon$-isometry for 
some $\epsilon\geq0$ if $dis f \leq \epsilon$ and $Y$ is an $\epsilon$-neighborhood of $f(X)$. We sometimes say that `$f(X)$ is $\epsilon$-dense in 
$Y$' if $Y$ is an $\epsilon$-neighborhood of $f(X)$. There are several equivalent formulations of Gromov-Hausdorff 
convergence of compact metric spaces, we choose the following one for our purpose (see\cite{BBI}, p. 260): A sequence $\{X_n\}$ of compact metric spaces 
converges to a compact metric space $X$ if there is a sequence $\{\epsilon_n\}_{n\in\mathbb{N}}$ of positive numbers and a sequence of maps 
$f_n: X_n \rightarrow X$ such that every $f_n$ is an $\epsilon_n$-isometry and $\epsilon_n\rightarrow 0$ as $n\rightarrow\infty$.
 
\section{From coarse equivalence of two box spaces to the coarse equivalence of their corresponding groups}

\begin{prop}\label{coarseequi}
If $\square_{G_n} G$ and $\square_{H_n} H$ are coarsely equivalent, then $G$ and $H$ are also coarsely equivalent.
Similarly, if there is a coarse embedding of the box space  $\square_{G_n} G$ into the box space $\square_{H_n} H$, 
then there exists a coarse embedding of $G$ into $H$.
\end{prop}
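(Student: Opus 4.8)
The plan is to restrict the given coarse map between the box spaces to large metric balls, to recognise these balls, via residual finiteness, as balls inside $G$ and $H$ themselves, and then to extract a limiting map $\phi\colon G\to H$ by a pigeonhole/diagonal argument.

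By \ref{2.3} we may assume, after passing to a subsequence and reindexing the components, that $f$ restricts to a $(\rho_{+},\rho_{-})$-coarse embedding $f_n:=f|_{G/G_n}\colon G/G_n\to H/H_n$ for every $n$, with $f_n(1_{G/G_n})=1_{H/H_n}$; in the coarse equivalence case each $f_n$ is moreover a $(\rho_{+},\rho_{-},c)$-coarse equivalence. Next recall the basic feature of box spaces coming from residual finiteness: since $\bigcap_n G_n=\{1\}$ and balls in $G$ are finite, for each $r>0$ there is $N(r)$ such that $G_n\cap B_{4r}^G(1_G)=\{1\}$ whenever $n\ge N(r)$, and then the quotient map $\pi_n^G\colon G\to G/G_n$ restricts to an isometry of $B_r^G(1_G)$ onto $B_r^{G/G_n}(1_{G/G_n})$; likewise for $H$, with a function $N^H$. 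Choose $r_n\to\infty$ with $N(r_n)\le n$ and $N^H(\rho_{+}(r_n))\le n$. Since $f_n(1)=1$, the map $f_n$ sends $B_{r_n}^{G/G_n}(1)$ into $B_{\rho_{+}(r_n)}^{H/H_n}(1)$, and composing with the isometric identifications just described yields a map
\[
\psi_n\colon B_{r_n}^G(1_G)\longrightarrow B_{\rho_{+}(r_n)}^H(1_H)
\]
satisfying $\rho_{-}(d_G(x,y))\le d_H(\psi_n(x),\psi_n(y))\le \rho_{+}(d_G(x,y))$ for all $x,y\in B_{r_n}^G(1_G)$.

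Now I pass to the limit. For each fixed $r$ there are only finitely many maps from the finite set $B_r^G(1_G)$ to the finite set $B_{\rho_{+}(r)}^H(1_H)$, so among the infinitely many indices $n$ with $r_n\ge r$ some restriction $\psi_n|_{B_r^G(1_G)}$ is attained infinitely often; a diagonal argument (K\"onig's lemma along the increasing exhaustion of $G$ by the balls $B_r^G(1_G)$) then produces an infinite set $\mathcal N\subseteq\N$ and, for each $r$, a single map $\phi^{(r)}\colon B_r^G(1_G)\to B_{\rho_{+}(r)}^H(1_H)$ with $\psi_n|_{B_r^G(1_G)}=\phi^{(r)}$ for all $n\in\mathcal N$ with $r_n\ge r$. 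The maps $\phi^{(r)}$ agree on overlaps, hence glue to a map $\phi\colon G\to H$, and the displayed inequality, tested on any ball containing a prescribed pair $x,y$, shows that $\phi$ is a $(\rho_{+},\rho_{-})$-coarse embedding. In the coarse equivalence case one checks in addition that $\phi(G)$ is $c$-dense in $H$: given $h\in H$ and a large $n\in\mathcal N$, choose $\bar x\in G/G_n$ with $d_{H/H_n}(f_n(\bar x),\pi_n^H(h))\le c$; the lower Lipschitz bound forces $d_{G/G_n}(1,\bar x)$ to be bounded by a quantity $R(h)$ depending only on $|h|$ (via a generalised inverse of $\rho_{-}$), so $\bar x$ lifts to some $x\in B_{R(h)}^G(1_G)$, and for $n$ large enough $\phi(x)=\psi_n(x)$ is identified with $f_n(\bar x)$, whence $d_H(\phi(x),h)\le c$. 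Thus $\phi$ is a $(\rho_{+},\rho_{-},c)$-coarse equivalence.

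I expect the only real obstacle to be the bookkeeping of scales: one must choose the radii $r_n$ so that all the balls in $G/G_n$ and $H/H_n$ that enter the argument are genuinely isometric to balls in $G$, resp.\ $H$ (this is exactly where residual finiteness is used), and one must arrange the pigeonhole extraction compatibly for every radius simultaneously. Conceptually the statement says no more than that $G/G_n\to G$ and $H/H_n\to H$ in the space of marked groups (equivalently, Gromov-Hausdorff convergence of balls, cf.\ \ref{2.4}) and that a coarse map between the box spaces descends to a coarse map between these limits.
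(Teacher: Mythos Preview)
Your proof is correct and follows essentially the same route as the paper: lift the maps $f_n$ to maps between balls in $G$ and $H$ via the isometric identifications coming from residual finiteness, then extract a limiting $\phi\colon G\to H$ by a pigeonhole/diagonal argument. The only cosmetic difference is indexing (the paper fixes a radius $r$ and chooses a level $l_r$, whereas you fix a level $n$ and choose a radius $r_n$), and you in fact supply the verification of $c$-density of $\phi(G)$ in $H$, which the paper leaves implicit.
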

\begin{proof} We give the proof of the first part of this proposition. The second part can be proved in a similar way.  
Let $\square_{G_n} G$ and $\square_{H_n} H$ be coarsely equivalent. From the discussion in Subsection \ref{2.3}, we can assume that 
there exists a $(\rho_{+}, \rho_{-}, c)$-coarse equivalence $\phi_n:G/G_n\rightarrow H/H_n$ with $\phi_n(1_{G/G_n})=1_{H/H_n}$ for all $n$.

For each ball $B_r^G(1)$ in $G$, we will construct a map $f_r$ from $B_r^G(1)$ into $B_{\rho_{+}(r)}^H(1)$ in $H$ so that they are compatible in the sense 
$f_r|_{B_{r-1}^G(1)}=f_{r-1}$ for all $r\in\mathbb{N}\setminus\{1\}$. The construction is as follows:

For each ball of radius $r$ in $G$, there exists an integer $m_r$ such that the projection from $G$ onto $G/G_{m_r}$ is injective 
on $B_r^G(1)$. Since $\phi_{m_r}$ is a $(\rho_{+}, \rho_{-}, c)$-coarse equivalence, the image $\phi_{m_r}[B_r^{G/G_{m_r}}(1)]$ is inside
$B_{\rho_{+}(r)}^{H/H_{m_r}}(1)$.
We also find a large integer $k_r$ such that the projection $H\twoheadrightarrow H/H_{k_r}$ is injective on $B_{\rho_{+}(r)}^H(1)$. 
Without loss of generality, we assume that $l_r:=m_r=k_r$. We define $\widetilde{\phi}_{l_r}: B_r^G(1)\rightarrow B_{\rho_{+}(r)}^H(1)$ 
so that the following diagram commutes: 

\begin{center}
\begin{tikzpicture}[node distance=2.5cm, auto]

  \node (A) {$B_r^G(1)(\subset G$)};
  \node (B) [below of=A] {$B_r^{G/G_{l_r}}(1)(\subset G/G_{l_r})$};
  \node (C) [node distance=4.5cm, right of=B] {$B_{\rho_{+}(r)}^{H/H_{l_r}}(1)(\subset H/H_{l_r})$};
  \node (D) [node distance=4.5cm, right of=A] {$B_{\rho_{+}(r)}^H(1)(\subset H)$};
  \draw[->] (A) to node {$\widetilde{\phi}_{l_r}$} (D);
  \draw[->] (A) to node {$\backsimeq$ (isometry)} (B);
  \draw[->] (B) to node {$\phi_{l_r}$} (C);
  \draw[->] (D) to node {$\backsimeq$ (isometry)} (C);
\end{tikzpicture}
\end{center}

Now, we consider the ball $B_1^G(1_G)$. If we restrict each of $\{\widetilde{\phi}_{l_r}\}_{r\in\mathbb{N}}$ on $B_1^G(1_G)$,
there exists a subsequence of $\{\widetilde{\phi}_{l_r}\}_{r\in\mathbb{N}}$ so that each of the functions in the subsequence coincides on 
$B_1^G(1_G)$. We take this subsequence and restrict each of 
the functions in this subsequence on $B_2^G(1_G)$. As before, we again extract another subsequence from $\{\widetilde{\phi}_{l_r}\}_{r\in\mathbb{N}}$ 
so that the functions in the subsequence agree on $B_2^G(1_G)$. In this way, using induction on the radius $r$ of the ball $B_r^G(1_G)$ and applying 
`Cantor's diagonal argument', we obtain $f_r: B_r^G(1)(\subseteq G) \rightarrow B_{\rho_{+}(r)}^H(1)(\subseteq H)$ such that $f_r|_{B_{r-1}^G(1)}=f_{r-1}$ 
for all $r\in\mathbb{N}\setminus\{1\}$.  Now, taking the limit of $f_r$ as $r\rightarrow\infty$, we construct a $(\rho_{+}, \rho_{-}, c)$-coarse equivalence
$\phi: G \rightarrow H$ with $\phi(1_G)=1_H$ such that there exists a subsequence $\{n_r\}_{r\in\mathbb{N}}$ satisfying the following diagram: 
\begin{center}
\begin{tikzpicture}[node distance=2.5cm, auto]

  \node (A) {$B_r^G(1)(\subset G$)};
  \node (B) [below of=A] {$B_r^{G/G_{n_r}}(1)(\subset G/G_{n_r})$};
  \node (C) [node distance=4.5cm, right of=B] {$B_{\rho_{+}(r)}^{H/H_{n_r}}(1)(\subset H/H_{n_r})$};
  \node (D) [node distance=4.5cm, right of=A] {$B_{\rho_{+}(r)}^H(1)(\subset H)$};
  \draw[->] (A) to node {$\phi$} (D);
  \draw[->] (A) to node {$\backsimeq$ (isometry)} (B);
  \draw[->] (B) to node {$\phi_{n_r}$} (C);
  \draw[->] (D) to node {$\backsimeq$ (isometry)} (C);
\end{tikzpicture}
\end{center}

 \end{proof}

 \begin{rem}\label{limitqi}
  In the abovementioned construction of $\phi$ from $\{\phi_n\}_{n\in\mathbb{N}}$, after passing through the subsequnce $\{n_r\}_{r\in\mathbb{N}}$ 
  and reindexing it, we obtain that
  
  \begin{center}
\begin{tikzpicture}[node distance=2.5cm, auto]

  \node (A) {$B_r^G(1)(\subset G$)};
  \node (B) [below of=A] {$B_r^{G/G_{n_r}}(1)(\subset G/G_{n_r})$};
  \node (C) [node distance=4.5cm, right of=B] {$B_{\rho_{+}(r)}^{H/H_{n_r}}(1)(\subset H/H_{n_r})$};
  \node (D) [node distance=4.5cm, right of=A] {$B_{\rho_{+}(r)}^H(1)(\subset H)$};
  \draw[->] (A) to node {$\phi$} (D);
  \draw[->] (A) to node {$\backsimeq$ (isometry)} (B);
  \draw[->] (B) to node {$\phi_{n_r}$} (C);
  \draw[->] (D) to node {$\backsimeq$ (isometry)} (C);
\end{tikzpicture}
\end{center}
We denote this construction of $\phi$ from $\{\phi_n\}_{n\in \mathbb{N}}$ by the notation 
$\{\phi_n\}_{n\in \mathbb{N}}\xrightarrow{\{n_r\}_{r\in \mathbb{N}}} \phi$.

 \end{rem}
 
\section{From coarse equivalence of two box spaces to UME of their corresponding groups}

\subsection{The construction of the topological coupling}\label{subsection4.1}

We suppose that there is a $(\rho_{+}, \rho_{-}, c)$-coarse equivalence $\phi$ between $\square_{G_n} G$ to $\square_{H_n} H$.
As discussed in \ref{2.3}, we assume that $\phi$ maps $G/G_n$ into $H/H_n$, the image $\phi[G/G_n]$ is $c$-dense in $H/H_n$ and 
$\phi(1_{G/G_n})=1_{H/H_n}$ for all $n$. 

\bigskip

We first recall the `topological coupling space' described in \cite{Ha} and \cite{Sh}. We define
$W=\{f:G\rightarrow H|f \hspace*{1mm}\mbox{is a} \hspace*{1mm}(\rho_{+},\rho_{-},c)-\mbox{coarse equivalence}\}$. 
By Proposition \ref{coarseequi}, $W$ is non-empty. We consider the set of all functions from $G$ to $H$ with pointwise convergence topology or with the 
topology of compact convergence.  We induce the subspace topology to $W$. Since the set of all functions from $G$ to $H$ with pointwise convergence topology
is a regular space with countable basis, by `Uryshon metrization theorem' it is metrizable \cite{Mu}. 
We, in particular, give the following metric on $W$: $$d(\phi,\psi)=2^{-sup\{r | \phi|_{B_r(1_G)}\equiv \psi|_{B_r(1_G)}\}}.$$
It is easy to see that the topology induced by this metric coincides with the pointwise convergence topology on $W$ and $W$ is a 
closed subset of the set of all functions from $G$ to $H$ with the pointwise convergence topology. Moreover, $W$ is a locally compact space \cite{Ha}.
We recall the argument briefly. Fix an element $\phi_0\in W$. We consider 
the neighborhood $\mathcal{U}_{(F,k)}:=\{\phi\in W |\hspace*{1mm} d_G\big(\phi(x),\phi_0(x)\big)\leq k \hspace*{1mm}\mbox{for all}\hspace*{1mm} x\in F\}$ of
$\phi_0$ corresponding to a non-empty finite subset $F$ in $G$ and a number $k>0$.  Now, by Ascoli's theorem, $\mathcal{U}_{(F,k)}$ is a 
compact neighborhood of $\phi_0$. 

\bigskip

We define $G$ and $H$ (left) actions  on a function $\phi$ in $W$ as follows:
$$gh\phi(x)= h\phi(g^{-1}x)$$
Let $Y:=\{\phi:G\rightarrow H | \phi\in W \hspace*{1mm}\mbox{and}\hspace*{1mm}\phi(1_G)=1_H\}$. 
Again using Proposition \ref{coarseequi}, we know that $Y$ is non-empty.
We induce the metric of $W$ on $Y$. We define $G$ action on $Y$ by
$$[g\cdot \phi](g')=  [\phi(g^{-1})]^{-1} \phi(g^{-1}g').$$ 
It is easy to see that $G$ actions on $W$ and $Y$ are continuous.  
By \cite{Ha} (p. 99), we know that $W$ is a `topological coupling space' of $G$ and $H$, where $Y$ is a compact fundamental domain of $H$. 

\bigskip

 If there is a  $(\rho_{+}, \rho_{-}, c)$-coarse equivalence (resp. $(\rho_{+}, \rho_{-})$-coarse embedding) $\phi_n:G/G_n \rightarrow H/H_n$ for all $n$,
 then there exists a finite group $M$, independent of $n$, such that there is an injective coarse equivalence (resp. coarse embedding) 
 $\widetilde{\phi_n}:G/G_n \rightarrow H/H_n\times M$ for all $n$ with some modification of $(\rho_{+}, \rho_{-}, c)$ (resp. $(\rho_{+}, \rho_{-})$), 
 which we denote by the same notation as before. We define 
 $Z_n:=\{f: G/G_n\rightarrow H/H_n\times M | f\hspace*{1mm}\mbox{is injective and}\hspace*{1mm} f \hspace*{1mm}\mbox{is a} $
 $(\rho_{+},\rho_{-},c)-\mbox{coarse equivalence}\}$
 and $X_n:=\{f\in Z_n | f(1_{G/G_n})=1_{H/H_n}\}$. In a similar way, as defined on $W$, we give a metric $d_n$ on each $Z_n$ and a $G$-action on it 
 (through the action of $G/G_n$). By the construction given in \ref{coarseequi}, for a sequence of injective $(\rho_{+},\rho_{-},c)$-coarse equivalence 
 (resp. injective $(\rho_{+},\rho_{-})$-coarse embedding) $f_n:G/G_n\rightarrow H/H_n\times M$ for all $n$, we will have an injective 
 $(\rho_{+},\rho_{-},c)$-coarse equivalence (resp. injective $(\rho_{+},\rho_{-})$-coarse embedding) $\widetilde{f}$ from $G$ into 
 $H\times M$. For the notational convenience, in the remaining part of this subsection and in 
 Subsection \ref{subsection4.2}, we will replace $H_n\times M$ by $H_n$ and $H\times M$ by $H$.

 \bigskip 
 
For our purpose, we will construct another topological coupling $Z(\subseteq W)$
for $G$ and $H$. We will take the Gromov-Haudorff limit of the collection of finite metric spaces $X_n$ 
(possibly after passing to a subsequence) and identify the limit with a $G$-invariant compact subset $X$ of $Y$ (Proposition \ref{ghconv}), 
which will turn out to be a new fundamental domain for $H$, and we will define the new `topological coupling space' as $Z= H\hspace*{1mm}X$. 

\bigskip

\begin{prop}\label{ghconv}
 There exists a subsequence of $\{X_n\}_{n\in\mathbb{N}}$ which converges to a $G$-invariant compact subspace $X$ of $Y$ in Gromov-Hausdorff topology. 
\end{prop}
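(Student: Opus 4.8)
The plan is to realize each $X_n$ as a compact metric space, show that these spaces are uniformly bounded in diameter and uniformly totally bounded (in fact uniformly equi-compact), extract a Gromov--Hausdorff convergent subsequence by Gromov's compactness criterion, and then identify the GH limit with a closed subspace $X$ of the compact space $Y$ which is moreover $G$-invariant. First I would recall that each $f\in X_n$ is a $(\rho_+,\rho_-,c)$-coarse equivalence from the finite set $G/G_n$ to $H/H_n$ fixing the base point, so $f$ is determined by finitely many values, each lying in a ball of radius $\rho_+(\mathrm{Diam}(G/G_n))$; hence $X_n$ is a finite set, which we equip with the same sup-type metric $d_n$ used on $Z_n$ (or equivalently the $2^{-r}$ metric measuring agreement on balls $B_r^{G/G_n}(1)$). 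The key uniform bound is that, since all $X_n$ use the \emph{same} pair of data $(\rho_+,\rho_-,c)$ and the same marking, the metric $d_n$ takes values in $\{2^{-r}: r\ge 0\}\cup\{0\}$ independently of $n$, so $\mathrm{Diam}(X_n)\le 1$ for all $n$ and, for each fixed $\varepsilon=2^{-r}$, an $\varepsilon$-net of $X_n$ is obtained by choosing one representative per restriction to $B_r^{G/G_n}(1)$; the number of such restrictions is at most $|B_{\rho_+(r)}^{H}(1)|^{|B_r^{G}(1)|}$, a bound \emph{independent of $n$} once $n$ is large enough that the projections $G\twoheadrightarrow G/G_n$ and $H\twoheadrightarrow H/H_n$ are injective on the relevant balls (as in the proof of Proposition~\ref{coarseequi}).

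With the uniform totally-bounded bound in hand, Gromov's precompactness theorem for the Gromov--Hausdorff topology (see \cite{BBI}) gives a subsequence $\{X_{n_k}\}$ converging in GH topology to a compact metric space $X$, with $\varepsilon_k$-isometries $\Phi_k: X_{n_k}\to X$, $\varepsilon_k\to 0$. To identify $X$ as a subspace of $Y$, I would argue exactly as in Proposition~\ref{coarseequi} and Remark~\ref{limitqi}: an element of $X$ is (a limit of) a coherent family of restrictions $f_{n_k}|_{B_r^{G/G_{n_k}}(1)}$, which under the canonical isometries $B_r^{G}(1)\cong B_r^{G/G_{n_k}}(1)$ and $B_{\rho_+(r)}^{H/H_{n_k}}(1)\cong B_{\rho_+(r)}^{H}(1)$ (valid for large $k$) yield a coherent family of partial maps $B_r^G(1)\to B_{\rho_+(r)}^H(1)$; by Cantor's diagonal extraction these assemble to a genuine $(\rho_+,\rho_-,c)$-coarse equivalence $G\to H$ fixing $1_G$, i.e.\ an element of $Y$. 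This gives a well-defined map $X\hookrightarrow Y$; one checks it is an isometric embedding because the metric on $X$ is the limit of the $d_{n_k}$, which is exactly the ``agreement on $B_r$'' metric, matching the metric on $Y\subseteq W$. Since $Y$ is compact and $X$ is compact, the image is closed in $Y$.

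Finally, $G$-invariance: the $G$-action on $Z_n$ (through $G/G_n$) by $[g\cdot f](g')=[f(g^{-1})]^{-1} f(g^{-1}g')$ preserves $X_n$ — it preserves injectivity, the coarse-equivalence constants, and the normalization $f(1)\mapsto 1$ — and it is isometric for $d_n$ (a left translation by $g$ permutes the balls $B_r(1)$ only up to the fixed radius shift by $d_G(1,g)$, so agreement of $g\cdot f$ and $g\cdot f'$ on $B_r(1)$ is controlled by agreement of $f,f'$ on $B_{r+d_G(1,g)}(1)$ and vice versa; in fact for the normalized action one gets an honest isometry). Because GH limits of spaces carrying isometric $G$-actions that are ``compatible'' pass to an isometric $G$-action on the limit — here one simply notes that the extracted diagonal limit of $g\cdot f_{n_k}$ is $g\cdot(\lim f_{n_k})$ for the $G$-action already defined on $Y$ — the subspace $X\subseteq Y$ is $G$-invariant. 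I expect the main obstacle to be the bookkeeping in the GH-limit identification step: making precise that the coherent families of partial maps extracted from the $\varepsilon_k$-isometries actually glue (via the diagonal argument) to a single element of $Y$, and that this identification is simultaneously isometric and $G$-equivariant; the precompactness and invariance parts are comparatively routine given the uniformity of $(\rho_+,\rho_-,c)$ and the work already done in Proposition~\ref{coarseequi}.
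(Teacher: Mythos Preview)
Your outline follows essentially the same strategy as the paper: prove that $\{X_n\}$ is uniformly totally bounded (the paper's Lemma~\ref{relcomp}), invoke Gromov precompactness, identify the limit inside $Y$ via the diagonal extraction of Proposition~\ref{coarseequi}, and then argue $G$-invariance. The paper organizes this slightly differently --- it first picks $(1/k)$-nets $S_n^{(k)}=\{x_{i,n}\}$ in each $X_n$, pushes each $x_{i,n}$ to a limit $x_i\in Y$ using the construction of Remark~\ref{limitqi}, and \emph{defines} $X:=\overline{\{x_i\}}\subseteq Y$ from the outset; GH convergence and $G$-invariance are then checked for this concrete $X$. This avoids having to identify an abstract GH limit with a subspace of $Y$ after the fact, which is the step you yourself flag as the main bookkeeping obstacle.

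One genuine slip: the $G$-action on $X_n$ (and on $Y$) is \emph{not} an isometry for the metric $d(\phi,\psi)=2^{-\sup\{r:\phi|_{B_r}=\psi|_{B_r}\}}$. Your own parenthetical already says the correct thing --- agreement of $g\cdot f$ and $g\cdot f'$ on $B_r(1)$ is controlled by agreement of $f,f'$ on $B_{r+|g|}(1)$ --- which gives a bi-Lipschitz bound $d(g\cdot\phi,g\cdot\psi)\le 2^{|g|}d(\phi,\psi)$, not equality; the paper records exactly this as equicontinuity in Lemma~\ref{equicont}. Your subsequent appeal to ``GH limits of spaces carrying isometric $G$-actions'' therefore needs to be replaced by the (equally standard) statement for uniformly equicontinuous actions. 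The paper handles $G$-invariance more concretely: since $S_n^{(k)}$ is a $(1/k)$-net in $X_n$, each $g\cdot x_{i,n}$ is $(1/k)$-close to some $x_{j,n}$ with $j\le N_k$; by pigeonhole a single $j$ works along a subsequence, so in the limit $d_X(g\cdot x_i,x_j)\le 1/k$, whence $\overline{G\cdot\{x_i\}}=\overline{\{x_i\}}=X$. This argument uses only Lemma~\ref{gpactionconv} (that $\{\phi_n\}\to\phi$ implies $\{g\cdot\phi_n\}\to g\cdot\phi$ along a further subsequence), which is the precise version of your claim that ``the extracted diagonal limit of $g\cdot f_{n_k}$ is $g\cdot(\lim f_{n_k})$''.
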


We refer the reader to  \ref{2.4}  for the definition of Gromov-Hausdorff topology. 
Before going into the proof of \ref{ghconv}, we prove the following lemmas. 

\begin{lem}\label{relcomp}
 The sequence $\{X_n\}_{n\in\mathbb{N}}$ of compact metric spaces is `uniformly totally bounded', i.e.,
 
\begin{itemize}
 \item  there is a constant $D$ such that $\mbox{diam}\hspace*{1mm} X_n \leq D$ for all $n$;
 \item  for all $\epsilon>0$ there exists a natural number $N=N(\epsilon)$ such that
every $X_n$ contains an $\epsilon$-net consisting of at most $N$ points.
\end{itemize}
Sometimes, this type of sequence $\{X_n\}_{n\in\mathbb{N}}$ is also called `relatively compact' in Gromov-Hausdorff metric. 
\end{lem}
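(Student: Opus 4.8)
The plan is to verify the two bullet points of Lemma~\ref{relcomp} directly from the definitions, using the fact that each $X_n$ is a set of basepoint-preserving $(\rho_+,\rho_-,c)$-coarse equivalences from $G/G_n$ into $H/H_n$ equipped with the metric $d_n$ modelled on the metric $d(\phi,\psi)=2^{-\sup\{r\,:\,\phi|_{B_r(1)}\equiv\psi|_{B_r(1)}\}}$ on $W$. The key observation is that this metric is bounded by $1$ regardless of $n$: if two maps disagree already on $B_0(1_{G/G_n})=\{1_{G/G_n}\}$ they would have to disagree at the basepoint, which is impossible since every $f\in X_n$ sends $1_{G/G_n}$ to $1_{H/H_n}$; hence they always agree on $B_0(1)$ and $d_n(f,g)\le 2^{-0}=1$. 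So one may take $D=1$ (or $D=2$ to be safe), settling the first bullet uniformly in $n$.

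For the second bullet I would exploit the ultrametric-like structure of $d_n$ together with the local finiteness of balls in $G$. Fix $\epsilon>0$ and choose $r\in\mathbb{N}$ with $2^{-r}<\epsilon$. Two maps $f,g\in X_n$ satisfy $d_n(f,g)\le 2^{-r}<\epsilon$ as soon as $f|_{B_r^{G/G_n}(1)}=g|_{B_r^{G/G_n}(1)}$, i.e.\ as soon as they agree on the ball of radius $r$ about the basepoint. Therefore an $\epsilon$-net is obtained by picking one representative from each equivalence class determined by the restriction $f\mapsto f|_{B_r^{G/G_n}(1)}$. The number of such classes is at most the number of functions from $B_r^{G/G_n}(1)$ to $B_{\rho_+(r)}^{H/H_n}(1)$ (using that a $(\rho_+,\rho_-,c)$-coarse equivalence based at the identity maps $B_r$ into $B_{\rho_+(r)}$), which is at most $|B_{\rho_+(r)}^{H}(1)|^{\,|B_r^{G}(1)|}$, since the cardinalities of balls in $G/G_n$ and $H/H_n$ are bounded by the cardinalities of the corresponding balls in $G$ and $H$ respectively (the quotient maps are surjective, hence can only decrease ball sizes). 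This bound $N(\epsilon):=|B_{\rho_+(r)}^{H}(1)|^{\,|B_r^{G}(1)|}$ is finite because $G$ and $H$ are finitely generated, and it is manifestly independent of $n$, which is exactly what is required.

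Assembling these: $D=2$ works for all $n$, and for each $\epsilon>0$ the number $N(\epsilon)$ above is a uniform bound on the size of an $\epsilon$-net in every $X_n$. This is precisely the statement that $\{X_n\}_{n\in\mathbb{N}}$ is uniformly totally bounded, hence relatively compact in the Gromov--Hausdorff metric (by the standard compactness criterion, e.g.\ \cite{BBI}), which is what Lemma~\ref{relcomp} asserts.

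I expect the only genuinely delicate point to be bookkeeping the metric correctly: one must be careful that the metric $d_n$ on $X_n$ really is the restriction of (an analogue of) the metric $d$ on $W$, so that agreement on $B_r^{G/G_n}(1)$ forces distance $\le 2^{-r}$, and that the convention $d_n(f,g)\le 1$ holds uniformly because the basepoint constraint rules out disagreement on $B_0(1)$. Everything else is a routine counting estimate once one records that basepoint-preserving coarse equivalences with the fixed parameters $(\rho_+,\rho_-,c)$ send balls of radius $r$ into balls of radius $\rho_+(r)$, and that passing to the finite quotients $G/G_n$, $H/H_n$ only shrinks ball cardinalities relative to $G$, $H$.
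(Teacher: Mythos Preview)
Your proof is correct and follows essentially the same strategy as the paper: bound the diameter by $1$ using the basepoint condition, then for the $\epsilon$-net choose $R$ with $2^{-R}<\epsilon$ and take one representative per restriction class $f\mapsto f|_{B_R^{G/G_n}(1)}$, bounding the number of such classes by the number of maps $B_R^{G/G_n}(1)\to B_{\rho_+(R)}^{H/H_n}(1)$, which is uniform in $n$. Your counting bound $|B_{\rho_+(r)}^{H}(1)|^{|B_r^{G}(1)|}$ is in fact the correct exponential estimate for the number of such maps, whereas the paper records the (looser, and as written slightly garbled) product bound $|S_G|^R\,|S_H|^{\rho_+(R)}$; either way the bound is finite and independent of $n$, which is all that is needed.
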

\begin{proof}
Fix $\epsilon >0$. We choose an arbitrarily large integer $R$ such that $2^{-R}<\epsilon$.
Since $\mbox{diam}\hspace*{1mm}  X_n \leq 1$ for all $n$, the first criterion is trivially satisfied. We now prove the second criterion.
We construct a map 
$$h_n:X_n\rightarrow \mathcal{F}\Big(B^{G/G_n}_R(1_{G/G_n}), B^{H/H_n}_{\rho_{+}(R)}(1_{H/H_n})\Big)\hspace*{3mm} \mbox{ defined by}\hspace*{3mm} \xi \mapsto \xi|_{B^{G/G_n}_R(1_{G/G_n})},$$
where $\mathcal{F}\Big(B^{G/G_n}_R(1_{G/G_n}), B^{H/H_n}_{\rho_{+}(R)}(1_{H/H_n})\Big)$ is the collection of all maps from 
\linebreak
the ball of radius $R$ in $G/G_n$ around $1_{G/G_n}$, denoted by $B^{G/G_n}_R(1_{G/G_n})$,  to the ball of radius $\rho_{+}(R)$ in $H/H_n$ around 
$1_{H/H_n}$, denoted by $B^{H/H_n}_{\rho_{+}(R)}(1_{H/H_n})$. The map is well-defined because the elements of $X_n$ are $(\rho_{+}, \rho_{-}, c)$-coarse equivalence.
For all $X_n$, we can  choose a subset $A_n$ of $X_n$ such that the restriction of $h_n$ on $A_n$ is injective and the image of $A_n$ under $h_n$ covers the
whole image of $X_n$.

\bigskip

We claim that $A_n$ is an $\epsilon$-net in $X_n$.  Take any $\xi$ in $X_n$. By the definition of $A_n$, there exists an element 
$\eta\in A_n$ so that $\xi$ and $\eta$ belong to the same fiber of the map $h_n$. Therefore, $\xi$ and $\eta$ coincides on the ball $B^{G/G_n}_R(1_G)$,
which implies that $d_X(\xi,\eta)\leq 2^{-R}<\epsilon$. Hence, $A_n$ is an $\epsilon$-net in $X_n$.

\bigskip

Now, we compute an estimate of the cardinality of the set $A_n$. We observe that 
\begin{eqnarray*}
\Big|\mathcal{F}\Big(B^{G/G_n}_R(1_{G/G_n}), B^{H/H_n}_{\rho_{+}(R)}(1_{H/H_n})\Big)\Big| & \leq & | B^{G/G_n}_R(1_{G/G_n})|\hspace*{1mm}|B^{H/H_n}_{\rho_{+}(R)}(1_{H/H_n})|\\
                                                         & \leq & |S_G|^R\hspace*{1mm}|S_H|^{\rho_{+}(R)},
\end{eqnarray*}
where $S_G$ and $S_H$ are two generating subsets of $G$ and $H$ respectively. Therefore, $|A_n|\leq |S_G|^R\hspace*{1mm}|S_H|^{\rho_{+}(R)}$ for all $n$.
Hence, $\{X_n\}_{n\in\mathbb{N}}$  is uniformly totally bounded.
\end{proof}

\begin{lem}\label{gpactionconv}
  $$\{\phi_n\}_{n\in \mathbb{N}}\xrightarrow{\{l_r\}_{r\in \mathbb{N}}} \phi \hspace*{1mm}\Rightarrow\hspace*{1mm}\{g\cdot \phi_n\}_{n\in\mathbb{N}}
  \xrightarrow{\{l_{m_r}\}_{r\in \mathbb{N}}} g\cdot \phi,$$ 
 where $\phi_n\in X_n$, $\phi\in X$ and $m_r=max\{r+|g^{-1}|, \left \lceil{\rho_{+}(r+|g^{-1}|)}\right \rceil\}$. 
 For the notation $\{\phi_n\}_{n\in \mathbb{N}}\xrightarrow{\{l_r\}_{r\in \mathbb{N}}} \phi$, we refer the reader to Remark \ref{limitqi}.
\end{lem}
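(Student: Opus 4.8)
The plan is to translate the symbol $\xrightarrow{\{\cdot\}}$ of Remark \ref{limitqi} into an explicit family of commuting squares, and then to check that applying $g$ to $\phi$ carries these squares to the corresponding squares for $g\cdot\phi$, at the price of shifting indices from $l_r$ to $l_{m_r}$. Write $\pi_n\colon G\to G/G_n$ and $\pi'_n\colon H\to H/H_n$ for the quotient homomorphisms. Unwinding the notation, $\{\phi_n\}_{n\in\mathbb{N}}\xrightarrow{\{l_r\}_{r\in\mathbb{N}}}\phi$ says that, after passing to the subsequence and reindexing, for every $r$ the maps $\pi_{l_r}$ and $\pi'_{l_r}$ restrict to isometries of $B_r^G(1_G)$ onto $B_r^{G/G_{l_r}}(1_{G/G_{l_r}})$ and of $B_{\rho_{+}(r)}^H(1_H)$ onto $B_{\rho_{+}(r)}^{H/H_{l_r}}(1_{H/H_{l_r}})$, and that $\phi_{l_r}\circ\pi_{l_r}=\pi'_{l_r}\circ\phi$ on $B_r^G(1_G)$. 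The first thing I would record is that the objects involved are $G$-invariant: since $[g\cdot\psi](g')=[\psi(g^{-1})]^{-1}\psi(g^{-1}g')$ is $\psi$ pre-composed with the left translation $g'\mapsto g^{-1}g'$ and post-composed with the left translation $h\mapsto[\psi(g^{-1})]^{-1}h$ --- both isometries and bijections --- it preserves the class of $(\rho_{+},\rho_{-},c)$-coarse equivalences, injectivity, and the normalization ``maps $1$ to $1$''; hence $g\cdot\phi\in Y$ and $g\cdot\phi_n\in X_n$ for all $n$.

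The heart of the argument is a single computation. Fix $r$, set $s:=r+|g^{-1}|$ so that $m_r=\max\{s,\lceil\rho_{+}(s)\rceil\}$, and take $x\in B_r^G(1_G)$; then $g^{-1}$ and $g^{-1}x$ lie in $B_s^G(1_G)\subseteq B_{m_r}^G(1_G)$. Since $\pi_{l_{m_r}}$ is a homomorphism, the formula for the $G$-action on $X_{l_{m_r}}$ (which factors through $G/G_{l_{m_r}}$) gives
\[
[g\cdot\phi_{l_{m_r}}]\big(\pi_{l_{m_r}}(x)\big)=\big[\phi_{l_{m_r}}\big(\pi_{l_{m_r}}(g^{-1})\big)\big]^{-1}\,\phi_{l_{m_r}}\big(\pi_{l_{m_r}}(g^{-1}x)\big).
\]
Because $g^{-1},g^{-1}x\in B_{m_r}^G(1_G)$, the radius-$m_r$ instance of the hypothesis, $\phi_{l_{m_r}}\circ\pi_{l_{m_r}}=\pi'_{l_{m_r}}\circ\phi$, applies to each term on the right; as $\pi'_{l_{m_r}}$ is also a homomorphism, the right-hand side collapses to $\pi'_{l_{m_r}}\big([\phi(g^{-1})]^{-1}\phi(g^{-1}x)\big)=\pi'_{l_{m_r}}\big([g\cdot\phi](x)\big)$. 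Hence $[g\cdot\phi_{l_{m_r}}]\circ\pi_{l_{m_r}}=\pi'_{l_{m_r}}\circ[g\cdot\phi]$ on $B_r^G(1_G)$, which --- once the identifications are known to be valid --- is precisely the commuting square at radius $r$ witnessing $\{g\cdot\phi_n\}_{n\in\mathbb{N}}\xrightarrow{\{l_{m_r}\}_{r\in\mathbb{N}}}g\cdot\phi$.

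What remains, and where I expect the only real care to be needed, is the radius bookkeeping behind the form of $m_r$. The two terms of the maximum (the ceiling being there merely to keep $m_r$ an integer) are chosen so that the $G$-ball $B_{s}^G(1_G)$, which contains the domain-side points $g^{-1}$ and $g^{-1}x$, and the $H$-ball $B_{\rho_{+}(s)}^H(1_H)$, which contains all the values $\phi(g^{-1})$, $\phi(g^{-1}x)$ and $[g\cdot\phi](x)$ occurring above --- each of norm at most $\rho_{+}(s)$, since $\phi$ and $g\cdot\phi$ are $(\rho_{+},\rho_{-},c)$-coarse equivalences sending $1$ to $1$ --- both sit inside the balls ($B_{m_r}^G(1_G)$ and $B_{\rho_{+}(m_r)}^H(1_H)$ respectively) on which the radius-$m_r$ instance of the hypothesis furnishes the functional equation together with injectivity of $\pi_{l_{m_r}}$ and $\pi'_{l_{m_r}}$; since $m_r\ge r$, the same instance also supplies the isometric identifications needed at radius $r$. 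Finally $r\mapsto m_r$ is strictly increasing, so $\{l_{m_r}\}_{r\in\mathbb{N}}$ is a genuine subsequence of $\{l_r\}_{r\in\mathbb{N}}$, and the proof is complete. Beyond this bookkeeping the argument is purely formal: the only inputs are that left translations are isometries and that the quotient maps are group homomorphisms.
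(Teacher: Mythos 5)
Your proof is correct and takes essentially the same route as the paper: the paper simply exhibits the commuting square at radius $m_r$ restricted to the translated ball $g^{-1}B_r^G(1_G)$, and your computation is an explicit unwinding of that diagram using that the quotient maps are homomorphisms and that the action is a composition of left translations. The extra care you take with the radius bookkeeping and with checking that the action preserves $X_n$ and $Y$ only makes explicit what the paper leaves implicit.
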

\begin{proof} First, we recall the definition of $g\cdot\phi_n$ and $g\cdot \phi$, where $\phi_n\in X_n$ and $\phi\in X$. They are defined as follows: 
$$[g\cdot\phi_n](\bar{x})=[\phi_n(g^{-1})]^{-1}\phi_n(g^{-1}\bar{x})\hspace*{1mm}\mbox{and}\hspace*{1mm}[g\cdot\phi](x)=[\phi(g^{-1})]^{-1}\phi(g^{-1} x),$$
where $\bar{x}\in G/G_n$ and $x\in G$. Now, the lemma easily follows from the following commuting diagram: 
\begin{center}
\begin{tikzpicture}[node distance=2.5cm, auto]

  \node (A) {$g^{-1}B_r^G(1)(\subset G$)};
  \node (B) [below of=A] {$g^{-1}B_r^{G/G_{l_{m_r}}}(1)(\subset G/G_{l_{m_r}})$};
  \node (C) [node distance=6 cm, right of=B] {$B_{\rho_{+}(r+|g^{-1}|)}^{H/H_{l_{m_r}}}(1)(\subset H/H_{l_{m_r}})$};
  \node (D) [node distance=6 cm, right of=A] {$B_{\rho_{+}(r+|g^{-1}|)}^H(1)(\subset H)$};
  \draw[->] (A) to node {$\phi$} (D);
  \draw[->] (A) to node {$\backsimeq$ (isometry)} (B);
  \draw[->] (B) to node {$\phi_{l_{m_r}}$} (C);
  \draw[->] (D) to node {$\backsimeq$ (isometry)} (C);
\end{tikzpicture}
\end{center}
\end{proof}

\textit{Proof of Proposition \ref{ghconv}:}\label{proofghconv}
In the proof of this proposition, we will be using some ideas of the construction of a limiting compact metric space for a `uniformly totally bounded' sequence 
of compact metric spaces (see Theorem 7.4.15, p. 274, \cite{BBI}).

\bigskip

\textit{Step 1:} By Lemma \ref{relcomp}, there exists a countable dense collection $S_n = \{x_{i,n}\}_{i=1}^\infty$ in each $X_n$ such that 
for every $k$ the first $N_k$ points of $S_n$, denoted by $S_n^{(k)}$, form a $(1/k)$-net in $X_n$. Without loss of generality, we assume that 
$S_n^{(k)}\subset S_n^{(k+1)}$ for all $k\in\mathbb{N}$. Using Proposition \ref{coarseequi} and Cantor's diagonal argument, after passing through a 
subsequence, we obtain that 
$$\{x_{i,n}\}_{n\in \mathbb{N}}\xrightarrow{\{n\}_{n\in \mathbb{N}}} x_i,$$
for some $x_i\in X$ and for all $i\in\mathbb{N}$.
Let $S^{(k)}:=\{x_i|i=1,\dots,N_k\}\subseteq X$ for all $k$ and $S:=\cup_{k=1}^\infty S^{(k)}$. We define 
$X=\overline{\{x_i\}_{i\in\mathbb{N}}}\subseteq Y$ and $X':=\overline{G\cdot \{x_i\}_{i\in\mathbb{N}}}\subseteq Y$. 
Since $X$ and $X'$ are closed subsets of the compact set $Y$, both $X$ and $X'$ are compact, and by definition
$X'$ is $G$-invariant.

\bigskip

\textit{Step 2:} In the rest of the paper, we denote the distance functions in $X_n$ and $X$ by $d_{X_n}$ and $d_X$, respectively. 
The distance $d_{X_n}(g\cdot x_{i,n}, g'\cdot x_{j,n})$ does not exceed $1$, i.e., belongs to a compact interval. Therefore, using 
`Cantor's diagonal procedure', we extract a subsequence of $\{X_n\}_{n=1}^{\infty}$ such that after passing through the subsequence 
$\{d_{X_n}(g\cdot x_{i,n},g'\cdot x_{j,n})\}_{n=1}^{\infty}$ converges for all $i, j\in\mathbb{N}$ and for all $g, g'\in G$. Moreover, using Lemma 
\ref{gpactionconv}, after passing through another subsequence, say $\{n_m\}_{m\in\mathbb{N}}$, we obtain that 
$$\{g\cdot x_{i,n}\}_{n\in \mathbb{N}}\xrightarrow{\{n_m\}_{m\in \mathbb{N}}} g\cdot x_i\hspace*{1mm}\mbox{and}\hspace*{1mm}\lim_{m\rightarrow\infty} 
d_{X_{n_m}}(g\cdot x_{i,n_m}, g'\cdot  x_{j,n_m})=d_X(g\cdot x_i, g'\cdot x_j),$$
for all $i, j\in\mathbb{N}$ and for all $g, g'\in G$.

\bigskip

\textit{Step 3:} We claim that $S^{(2k)}$ is a $(1/k)$-net in $X$ and $X'$. Indeed, every set $S_n^{(k)}= \{x_{i,n} | i=1,\dots, N_k\}$ is a $(1/k)$-net
in the respective space $X_n$ . Hence, for every $g\cdot x_{i,n}\in  X_n$  there is a $j\leq N_k$ such
that $d_{X_n}(g\cdot x_{i,n}, x_{j,n})\leq 1/k$. Since $N_k$ does not depend on $n$, for every fixed $g\in G$ and $i\in\mathbb{N}$,
there is a $j\leq N_k$ such that $d_{X_n}(g\cdot x_{i,n}, x_{j,n})\leq 1/k$ for infinitely many indices $n$.
Passing to the limit, we obtain that $d_X(g\cdot x_i, x_j)\leq 1/k$ for this $j$. Thus, $S^{(2k)}$ is a
$(1/k)$-net in $X$ and $X'$ for all $k$. Moreover, we obtain that $X=X'$, which implies that $X$ is $G$-invariant.  

\bigskip

\textit{Step 4:} Since, by Step 2, $d_X(x_{i,n_m},x_{j,n_m})\xrightarrow{m\rightarrow\infty}d_X(x_i,x_j)$ for all $i,j$, we obtain that $S_{n_m}^{(k)}$
converges to $S^{(k)}$ in GH-topology as $m\rightarrow\infty$ for all $k\in\mathbb{N}$. Now, since $S_{n_m}$ is dense in $X_{n_m}$
and $S$ is dense in $X$, we have $X_{n_m}$ converges to $X$ in GH-topology. Hence, we have our proposition. 
\hfill\(\Box\)

\subsection{Construction of a $G$-invariant measure on $X$}\label{subsection4.2}

In this section, we construct a $G$-invariant probability measure $\mu$ on $X$.
The idea of the construction of this measure uses the concept of Gromov's 
$\underline\Box_{1}$-convergence for metric-measure spaces (\cite{Gr2} p. 118).

\begin{prop}\label{measureconv}
Let $\{(X_n,d_{X_n})\}_{n\in\mathbb{N}}$ be the sequence of compact metric spaces which converges to $(X,d_X)$ in Gromov-Hausdorff topology as obtained by
Proposition \ref{ghconv}. 
We give a $G$-invariant probability measure $\mu_n$ on each $X_n$. Then, there exists a $G$-invariant probability measure $\mu$ on $X$.
\end{prop}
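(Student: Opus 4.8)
\quad The plan is to transport the measures $\mu_n$ to $X$ along the Gromov--Hausdorff approximations, extract a weak-$*$ limit, and then verify $G$-invariance using the asymptotic equivariance of those approximations that was isolated in Lemma \ref{gpactionconv}. Keep the subsequence furnished by Proposition \ref{ghconv}, so that $X_n\to X$ in Gromov--Hausdorff topology and, as in Step 2 of its proof, $f_n(g\cdot x_{i,n})\to g\cdot x_i$ for every $g\in G$ and $i\in\mathbb{N}$; here $f_n:X_n\to X$ are the $\eps_n$-isometries ($\eps_n\to 0$) furnished by the proof of Proposition \ref{ghconv}, which by construction also satisfy $f_n(x_{i,n})\to x_i$. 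Put $\nu_n:=(f_n)_*\mu_n$, a Borel probability measure on $X$. Since $X$ is compact (Proposition \ref{ghconv}), the space of Borel probability measures on $X$ with the weak-$*$ topology is compact and metrizable (Banach--Alaoglu together with separability of $C(X)$, or Prokhorov's theorem); hence, after passing to a further subsequence, $\nu_n\rightharpoonup\mu$ for some Borel probability measure $\mu$ on $X$. Only the existence of one invariant $\mu$ is needed, so passing to this subsequence is harmless.

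It then remains to prove $g_*\mu=\mu$ for each $g\in G$. Fix such a $g$ and a $\varphi\in C(X)$, and write $g\cdot$ for the continuous action map on $X$ and on every $X_n$. Using $(f_n)_*\mu_n=\nu_n$ and then the $G$-invariance of $\mu_n$,
\[
\int_X\varphi\,d\mu=\lim_n\int_{X_n}\varphi\big(f_n(\xi)\big)\,d\mu_n(\xi)=\lim_n\int_{X_n}\varphi\big(f_n(g\cdot\xi)\big)\,d\mu_n(\xi),
\]
the limits being taken along the chosen subsequence. The key point is the asymptotic equivariance estimate
\[
\delta_n(g):=\sup_{\xi\in X_n}d_X\big(f_n(g\cdot\xi),\;g\cdot f_n(\xi)\big)\;\xrightarrow{n\to\infty}\;0 .
\]
Granting it, uniform continuity of $\varphi$ on the compact space $X$ gives $\sup_{\xi\in X_n}\big|\varphi(f_n(g\cdot\xi))-\varphi(g\cdot f_n(\xi))\big|\to0$, so the previous limit equals $\lim_n\int_{X_n}\varphi(g\cdot f_n(\xi))\,d\mu_n(\xi)=\lim_n\int_X\varphi(g\cdot y)\,d\nu_n(y)=\int_X\varphi(g\cdot y)\,d\mu(y)=\int_X\varphi\,d(g_*\mu)$, where the last convergence is weak-$*$ convergence applied to the continuous function $y\mapsto\varphi(g\cdot y)$. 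As $\varphi$ and $g$ were arbitrary, $\mu$ is $G$-invariant; and $\mu(X)=\lim_n\nu_n(X)=1$, so $\mu$ is the required $G$-invariant probability measure.

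The whole difficulty is thus concentrated in the estimate $\delta_n(g)\to0$, which is exactly what Lemma \ref{gpactionconv} is for; I expect this propagation step to be the main obstacle. First, the $G$-action on $W$ (hence on each $X_n$ and on $X$) has a uniform modulus of continuity: from the formula $[g\cdot\phi](g')=[\phi(g^{-1})]^{-1}\phi(g^{-1}g')$ and the definition of the metric one checks $d(g\cdot\phi,g\cdot\psi)\le 2^{|g|}d(\phi,\psi)$, since agreement of $\phi,\psi$ on $B_r^G(1_G)$ forces agreement of $g\cdot\phi,g\cdot\psi$ on $B_{r-|g|}^G(1_G)$ (both $g^{-1}$ and $g^{-1}g'$ lying in $B_r^G(1_G)$ when $|g'|\le r-|g|$), and likewise inside $G/G_n$. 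Second, by Lemma \ref{relcomp} the finite nets $S_n^{(k)}=\{x_{i,n}\}_{i\le N_k}$ are $(1/k)$-dense in $X_n$ \emph{uniformly in $n$}, while, by the choice of subsequence, $f_n(x_{i,n})\to x_i$ and $f_n(g\cdot x_{i,n})\to g\cdot x_i$ as $n\to\infty$ for each of the finitely many $i\le N_k$. Given $\xi\in X_n$, picking $x_{i,n}\in S_n^{(k)}$ with $d_{X_n}(\xi,x_{i,n})\le 1/k$ and applying the triangle inequality through $f_n(g\cdot x_{i,n})$, $g\cdot x_i$ and $g\cdot f_n(x_{i,n})$, together with the $\eps_n$-isometry property of $f_n$ and the Lipschitz bound above, yields $\delta_n(g)\le 2^{|g|+1}/k+o_n(1)$; letting $n\to\infty$ and then $k\to\infty$ gives $\delta_n(g)\to0$. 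Making this last passage from the nets to all of $X_n$ precise — essentially a uniform, $g$-dependent version of Step 1 of the proof of Proposition \ref{ghconv} — is the only genuinely technical point, after which the weak-$*$ compactness argument above completes the proof.
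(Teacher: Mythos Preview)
Your argument is correct and follows the same strategy as the paper: push the $\mu_n$ forward along almost-equivariant $\eps_n$-isometries $f_n:X_n\to X$, extract a weak-$*$ limit via Banach--Alaoglu, and deduce $G$-invariance from the asymptotic equivariance $\delta_n(g)\to 0$, which the paper isolates as Proposition~\ref{gequifn} and proves by exactly the net-to-global extension and the Lipschitz bound $d(g\cdot\phi,g\cdot\psi)\le 2^{|g|}d(\phi,\psi)$ (Lemma~\ref{equicont}) that you sketch. The only cosmetic difference is that the paper verifies invariance on clopen sets via the Prokhorov metric while you test against $\varphi\in C(X)$; these are equivalent on a compact metrizable space, and your formulation is arguably cleaner.
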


We prove Proposition \hspace{0.5mm}\ref{measureconv}\hspace{0.5mm} at the end of this subsection. 
We prove this proposition by taking the weak* limit of the pushward measure of $\mu_n$ on $X$ by a sequence of suitable `$\xi_n$-isometry' from 
$X_n$ to $X$ (possibly after passing through a subsequence), where $\xi_n\rightarrow 0$ as $n\rightarrow\infty$. We need `almost $G$-equivariant' 
$\xi_n$-isometry from $X_n$ to $X$ to obtain $G$-invariance of the limiting measure. 
We prove the existence of such `almost $G$-equivariant' $\xi_n$-isometries in the following proposition.

\begin{prop}\label{gequifn}
There exist a subsequence $\{n_k\}_{k\in\mathbb{N}}$ and $\xi_k$-isometries $f_k:X_{n_k}\rightarrow X$ for all $k\in\mathbb{N}$
such that $sup_{x\in X_{n_k}}\hspace*{0.5mm}d_{X_{n_k}}\big(g\cdot f_k(x), f_k(g\cdot x)\big)< \xi_k$
for all $g\in B_k^G(1_G)$, where $\xi_k\rightarrow 0$ as $k\rightarrow\infty$. 
\end{prop}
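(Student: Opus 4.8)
\textbf{Proof proposal for Proposition \ref{gequifn}.}

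The plan is to extract, simultaneously, a Gromov--Hausdorff approximation and a large supply of approximate equivariance relations, and then to glue them together. Recall from Step~1 and Step~2 of the proof of Proposition~\ref{ghconv} that after passing to a subsequence we already have, for the countable dense sets $S_n=\{x_{i,n}\}_{i}\subset X_n$, the convergences $\{g\cdot x_{i,n}\}_{n}\xrightarrow{\{n_m\}} g\cdot x_i$ for all $i\in\mathbb{N}$ and all $g\in G$, together with $d_{X_{n_m}}(g\cdot x_{i,n_m},g'\cdot x_{j,n_m})\to d_X(g\cdot x_i,g'\cdot x_j)$. The first thing I would do is fix, for each $k$, a finite ``test set'' $T_k\subset G$ and a finite index set $I_k\subset\mathbb{N}$: concretely, let $T_k=B_k^G(1_G)$ and let $I_k=\{1,\dots,N_{2k}\}$, so that $S^{(2k)}=\{x_i:i\in I_k\}$ is a $(1/k)$-net in $X$ and $S_n^{(2k)}$ is a $(1/k)$-net in each $X_n$ (Lemma~\ref{relcomp} and Step~3 of Proposition~\ref{ghconv}). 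Since only finitely many elements $g\in T_k$, finitely many indices $i\in I_k$, and finitely many products $g\cdot x_{i,n}$ are involved at stage $k$, a diagonal choice of the subsequence $\{n_k\}$ guarantees that for $n=n_k$ all the relevant distances $d_{X_{n_k}}(g\cdot x_{i,n_k},g'\cdot x_{j,n_k})$ are within, say, $1/k$ of their limits $d_X(g\cdot x_i,g'\cdot x_j)$, and moreover that $d_{X_{n_k}}\big(g\cdot x_{i,n_k},\, x_{j(g,i),n_k}\big)\le 1/k$ whenever $j(g,i)\in I_k$ is an index realizing the net property for the point $g\cdot x_i\in X$; here we use that the net property passes to the limit as in Step~3.

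The second step is to build $f_k$. Define $f_k$ first on the finite net $S_{n_k}^{(2k)}$ by $f_k(x_{i,n_k}):=x_i$ for $i\in I_k$, and then extend it to all of $X_{n_k}$ by sending an arbitrary $\xi\in X_{n_k}$ to $x_{i}$ where $x_{i,n_k}$ is (a choice of) a nearest point of $S_{n_k}^{(2k)}$ to $\xi$. By the standard argument for GH-limits of uniformly totally bounded sequences (Theorem~7.4.15 of \cite{BBI}, already invoked in the proof of Proposition~\ref{ghconv}), the map $x_{i,n_k}\mapsto x_i$ is a $(2/k)$-isometry between the nets, and the nearest-point extension is an $\xi_k$-isometry $f_k:X_{n_k}\to X$ with $\xi_k=O(1/k)\to 0$: distortion on the net is controlled by Step~4, and passing from the net to the whole space costs only an extra additive $4/k$ because both nets are $(1/k)$-nets. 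This is the routine part and I would not belabour it.

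The third and genuinely delicate step is the approximate equivariance $\sup_{x\in X_{n_k}} d_{X_{n_k}}\big(g\cdot f_k(x),\,f_k(g\cdot x)\big)<\xi_k$ for $g\in B_k^G(1_G)$. Here one must be careful: $g\cdot f_k(x)$ lives in $X$ while $f_k(g\cdot x)$ also lives in $X$, so the distance is $d_X$, not $d_{X_{n_k}}$ (I would phrase it with $d_X$, treating the statement's $d_{X_{n_k}}$ as the intended target metric on the limit). Take $x\in X_{n_k}$ and let $x_{i,n_k}$ be the chosen nearest net point, so $d_{X_{n_k}}(x,x_{i,n_k})\le 1/k$ and $f_k(x)=x_i$. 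Applying $g$ (an isometry of $X_{n_k}$ and of $X$) and using that $f_k$ is an $\xi_k$-isometry, $d_{X_{n_k}}(g\cdot x, g\cdot x_{i,n_k})\le 1/k$, hence $d_X\big(f_k(g\cdot x), f_k(g\cdot x_{i,n_k})\big)\le 1/k+\xi_k$. So it suffices to estimate $d_X\big(g\cdot x_i,\, f_k(g\cdot x_{i,n_k})\big)$ on the net, i.e.\ for $i\in I_k$ and $g\in T_k$. Now $f_k(g\cdot x_{i,n_k})=x_{j}$ where $x_{j,n_k}$ is the nearest net point to $g\cdot x_{i,n_k}$ in $X_{n_k}$; by the diagonal choice in Step~1 we arranged $d_{X_{n_k}}(g\cdot x_{i,n_k},\, x_{j(g,i),n_k})\le 1/k$ with $x_{j(g,i)}=g\cdot x_i$ up to $1/k$ in $X$ (this is exactly the limiting net statement of Step~3 of Proposition~\ref{ghconv}, applied with the specific $g,i$), and since $f_k$ restricted to the net is a $(2/k)$-isometry fixing the labelling, $d_X(x_j, g\cdot x_i)\le O(1/k)$. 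Combining the three contributions gives $d_X\big(g\cdot f_k(x), f_k(g\cdot x)\big)=O(1/k)$, and one sets $\xi_k$ to be the resulting bound (a fixed multiple of $1/k$), which tends to $0$.

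The main obstacle, and the place where Lemma~\ref{gpactionconv} is essential, is making the diagonal extraction in the first step actually deliver the needed net-equivariance relation $d_{X_{n_k}}(g\cdot x_{i,n_k}, x_{j,n_k})\le 1/k$ \emph{simultaneously} for all $g\in B_k^G(1_G)$ and $i\le N_{2k}$: this works because at each fixed stage $k$ only finitely many group elements and points are involved, Lemma~\ref{gpactionconv} guarantees $\{g\cdot x_{i,n}\}_n\to g\cdot x_i$ along the subsequence (with the shift $n\mapsto n_{m_r}$ in the indexing, which is harmless since we are only extracting a further subsequence), and a limit of nets is a net of the same radius in the limit space, so the index $j=j(g,i)$ with $x_{j}$ close to $g\cdot x_i$ can be chosen from $I_k$ and then, for $n_k$ large enough, $x_{j,n_k}$ is close to $g\cdot x_{i,n_k}$. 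One then diagonalizes over $k$. Book-keeping the constants so that a single $\xi_k\to 0$ dominates the distortion bound, the $1/k$-net errors, and the equivariance error at stage $k$ is straightforward once the structure above is in place.
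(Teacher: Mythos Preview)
Your overall strategy---define $f_k$ on a net, extend by nearest point, then control the equivariance error via a triangle inequality---is the same as the paper's. But there is a genuine gap in your third step: you write ``Applying $g$ (an isometry of $X_{n_k}$ and of $X$)'', and this is false. With the metric $d(\phi,\psi)=2^{-\sup\{r:\phi|_{B_r}=\psi|_{B_r}\}}$ and the action $[g\cdot\phi](x)=\phi(g^{-1})^{-1}\phi(g^{-1}x)$, acting by $g$ shifts the argument into $g^{-1}B_r\subset B_{r+|g|}$, so agreement on $B_R$ only yields agreement on $B_{R-|g|}$ after the action; hence $d(g\cdot\phi,g\cdot\psi)\le 2^{|g|}\,d(\phi,\psi)$, not equality. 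This is precisely the content of Lemma~\ref{equicont}. Your estimate $d_{X_{n_k}}(g\cdot x, g\cdot x_{i,n_k})\le 1/k$ therefore becomes $\le 2^{|g|}/k$, and since you allow $|g|\le k$, the bound $2^k/k$ does not tend to $0$. The same blow-up contaminates the term $d_X\big(g\cdot f_k(x), g\cdot f_k(x_{i,n_k})\big)$ (which you implicitly bound by $0$ since $f_k(x)=f_k(x_{i,n_k})$, so that one is fine) and, more seriously, all downstream estimates that convert net-level control into a uniform bound over $B_k^G(1_G)$. Your claimed $\xi_k=O(1/k)$ is thus unjustified.

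The paper repairs exactly this point in two ways you are missing. First, via Lemma~\ref{gequiviso} it defines $f_k$ not just on $S_{n_k}^{(k)}$ but on the enlarged finite set $B_k^G(1_G)\cdot S_{n_k}^{(k)}$ by $g\cdot x_{i,n_k}\mapsto g\cdot x_i$ (checking well-definedness is the substance of that lemma), so that the middle term $d_X\big(g\cdot f_k(x_{i,n_k}), f_k(g\cdot x_{i,n_k})\big)$ vanishes \emph{exactly} on net points rather than being $O(1/k)$. Second, for the off-net error it invokes equicontinuity (Lemma~\ref{equicont}) to obtain a $\delta_k$ controlling $d(g\cdot x,g\cdot x_{i,n_k})$ and $d_X(f_k(g\cdot x),f_k(g\cdot x_{i,n_k}))$; since the diagonal extraction lets one force the isometry parameter $\epsilon_k$ to decay faster than $2^{-k}$, one gets $\delta_k\to 0$ and hence $\xi_k=3\epsilon_k+2\delta_k\to 0$. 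Your argument is salvageable along the same lines---replace the false isometry claim by Lemma~\ref{equicont} and either pass to $(2^{-2k})$-nets in place of $(1/k)$-nets, or, as the paper does, build exact equivariance on the $B_k^G$-orbit of the net---but as written the non-isometry of the $G$-action is a real obstruction that your bookkeeping does not survive.
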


Before going into the proof of Proposition \ref{gequifn}, we prove the following lemmas. 

\begin{lem}\label{equicont}
The actions of $G$ on $\{X_n | n\in\mathbb{N} \}\cup\{X\}$ are equicontinuous, i.e., $g\in G$ being fixed, for all $\epsilon>0$ there exists a $\delta>0$ 
such that if $d_{X_n}(\phi_n,\psi_n)< \epsilon$ (resp. $d_X(\phi,\psi)<\epsilon$), then $d_{X_n}\big(g\cdot\phi_n,g\cdot\psi_n\big)< \delta$
(resp. $d_X\big(g\cdot\phi,g\cdot\psi\big)< \delta$), where $\delta$ only depends on $g$ and $\epsilon$, and 
$\delta\rightarrow 0$ as $\epsilon\rightarrow 0$. 
\end{lem}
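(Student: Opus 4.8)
The plan is to make the dependence on $g$ explicit by reducing to the case of a single generator and then iterating along a geodesic word for $g$. Recall that the metric on each $X_n$ (and on $X$) is $d(\phi,\psi)=2^{-\sup\{r\,:\,\phi|_{B_r(1)}\equiv\psi|_{B_r(1)}\}}$, so $d(\phi_n,\psi_n)<2^{-R}$ exactly says $\phi_n$ and $\psi_n$ agree on $B_R^{G/G_n}(1)$. Thus everything is controlled by the radius on which two coarse equivalences agree, and I only need to track how this radius changes under the $G$-action.

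First I would unwind the formula $[g\cdot\phi_n](\bar x)=[\phi_n(g^{-1})]^{-1}\phi_n(g^{-1}\bar x)$. If $\phi_n$ and $\psi_n$ agree on $B_R^{G/G_n}(1)$, then for every $\bar x\in B_{R-|g^{-1}|}^{G/G_n}(1)$ we have $g^{-1}\bar x\in B_R^{G/G_n}(1)$ and also $g^{-1}\in B_R^{G/G_n}(1)$ (assuming $R\geq|g^{-1}|$), so $\phi_n(g^{-1}\bar x)=\psi_n(g^{-1}\bar x)$ and $\phi_n(g^{-1})=\psi_n(g^{-1})$; hence $[g\cdot\phi_n](\bar x)=[g\cdot\psi_n](\bar x)$ on $B_{R-|g^{-1}|}^{G/G_n}(1)$. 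This gives
\[
d_{X_n}(\phi_n,\psi_n)\leq 2^{-R}\ \Longrightarrow\ d_{X_n}(g\cdot\phi_n,g\cdot\psi_n)\leq 2^{-(R-|g^{-1}|)}=2^{|g^{-1}|}\,d_{X_n}(\phi_n,\psi_n),
\]
so one may take $\delta=2^{|g^{-1}|}\eps$ (more precisely, $\delta$ any number slightly larger than $2^{|g^{-1}|}\eps$ to accommodate the strict inequality, e.g. $\delta=2^{|g^{-1}|+1}\eps$). This depends only on $g$ (through $|g^{-1}|=|g|$, since $S_G$ is symmetric) and on $\eps$, and clearly $\delta\to0$ as $\eps\to0$. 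The identical computation, verbatim with $\bar x$ replaced by $x\in G$ and $G/G_n$ by $G$, handles the action on $X$; here one uses that elements of $X\subseteq Y$ are honest coarse equivalences $G\to H$ on which the same formula defines the action.

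The only subtlety — and it is minor — is the edge case $R<|g^{-1}|$: then $2^{-(R-|g^{-1}|)}\geq 1$, which is a trivial bound since $\operatorname{diam}X_n\leq1$, so the estimate still holds (one just picks $\delta\geq1$, or better, notes that for $\eps$ small we are always in the regime $R>|g^{-1}|$). I do not expect any real obstacle here; the content of the lemma is precisely the scaling identity above, and the point worth emphasizing is that the multiplicative constant $2^{|g|}$ is uniform in $n$, which is what makes it useful for the equicontinuity argument feeding into Proposition \ref{gequifn}.
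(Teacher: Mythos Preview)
Your proof is correct and follows essentially the same approach as the paper: both arguments compute directly that agreement on $B_R(1)$ implies agreement of the $g$-translates on $B_{R-|g|}(1)$, yielding the explicit Lipschitz bound $d(g\cdot\phi,g\cdot\psi)\leq 2^{|g|}d(\phi,\psi)$ and hence $\delta=2^{|g|}\eps$. Your write-up is in fact more careful than the paper's (which omits the unwinding of the formula and the edge case), and your direct computation for $X$ is cleaner than the paper's ``passing to the limit''.
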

\begin{proof}
 We fix $g\in G$ and $\epsilon >0$. Without loss of generality, we assume that $\epsilon< 2^{-|g|}$. If $d_{X_n}(\phi_n,\psi_n)=2^{-R}<\epsilon$,
 then $d_{X_n}\big(g\cdot \phi_n, g\cdot\psi_n\big)\leq 2^{-(R-|g|)}$. 
 Therefore, the lemma follows for $\{X_n | n\in\mathbb{N} \}$ by taking $\delta=\epsilon\hspace*{1mm} 2^{|g|}$. The statement for $X$ follows by 
 passing to the limit. 

\end{proof}

\begin{lem}\label{gequiviso}
 Let $k\in\mathbb{N}$ and $r\in\mathbb{N}$ be two fixed numbers. Then, there exists a subsequence $\{n_m\}_{m\in\mathbb{N}}$ such that
 $B_r^G(1_G)\cdot S_{n_m}^{(k)}$ converges to $B_r^G(1_G)\cdot S^{(k)}$ 
 in GH-topology and there exists an $\epsilon_m$-isometry $f_m^{(r,k)}:B_r^G(1_G)\cdot S_{n_m}^{(k)}\rightarrow B_r^G(1_G)\cdot S^{(k)}$ for all $m$ such that 
 $f_m^{(r,k)}|_{S_{n_m}^{(k)}}$ is $B_r^G(1_G)$-equivariant, i.e., $f_m^{(r,k)}(g\cdot x_{i,n_m})= g \cdot f_m^{(r,k)}(x_{i,n_m})$ for all $g\in B_r^G(1_G)$ and for all $i\in\{1,\ldots, N_k\}$, and
 $\epsilon_m\rightarrow 0$ as $m\rightarrow\infty$.   
\end{lem}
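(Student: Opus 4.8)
The plan is to build $f_m^{(r,k)}$ by hand as the obvious identification of the finite approximations, using the distance convergence already established in Step~2 of the proof of Proposition~\ref{ghconv}: along the subsequence $\{n_m\}$ produced there one has $d_{X_{n_m}}(g\cdot x_{i,n_m},g'\cdot x_{j,n_m})\to d_X(g\cdot x_i,g'\cdot x_j)$ for all $i,j\in\mathbb{N}$ and all $g,g'\in G$. First I would fix $r,k$ and observe that the set $P:=B_r^G(1_G)\times\{1,\dots,N_k\}$ is finite, so the equivalence relations $\sim_m$ on $P$ given by $(g,i)\sim_m(g',j)\iff g\cdot x_{i,n_m}=g'\cdot x_{j,n_m}$ in $X_{n_m}$ take only finitely many values; passing to a further subsequence (still denoted $\{n_m\}$) I may assume $\sim_m=\sim$ is independent of $m$.

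Next I would define $f_m^{(r,k)}\colon B_r^G(1_G)\cdot S_{n_m}^{(k)}\to B_r^G(1_G)\cdot S^{(k)}$ by $f_m^{(r,k)}(g\cdot x_{i,n_m}):=g\cdot x_i$ (both target sets being finite, hence compact, so GH-convergence makes sense). The only thing needing verification is well-definedness, i.e.\ that $(g,i)\sim(g',j)$ forces $g\cdot x_i=g'\cdot x_j$ in $X$: but $(g,i)\sim(g',j)$ means $d_{X_{n_m}}(g\cdot x_{i,n_m},g'\cdot x_{j,n_m})=0$ for \emph{every} $m$ along the subsequence, so by Step~2 the limit $d_X(g\cdot x_i,g'\cdot x_j)$ equals $0$. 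By construction $f_m^{(r,k)}$ is surjective onto $B_r^G(1_G)\cdot S^{(k)}$, and $f_m^{(r,k)}(g\cdot x_{i,n_m})=g\cdot f_m^{(r,k)}(x_{i,n_m})$ for $g\in B_r^G(1_G)$ and $i\le N_k$, so $f_m^{(r,k)}|_{S_{n_m}^{(k)}}$ is $B_r^G(1_G)$-equivariant, as required.

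Finally I would bound the distortion: for $p=g\cdot x_{i,n_m}$ and $q=g'\cdot x_{j,n_m}$ one has $\bigl|d_X\big(f_m^{(r,k)}(p),f_m^{(r,k)}(q)\big)-d_{X_{n_m}}(p,q)\bigr|=\bigl|d_X(g\cdot x_i,g'\cdot x_j)-d_{X_{n_m}}(g\cdot x_{i,n_m},g'\cdot x_{j,n_m})\bigr|$, and by Step~2 this tends to $0$ as $m\to\infty$ for each of the finitely many quadruples in $P\times P$; hence $\epsilon_m:=\mathrm{dis}\,f_m^{(r,k)}\to0$. Since $f_m^{(r,k)}$ is onto it is an $\epsilon_m$-isometry, so by the definition of Gromov--Hausdorff convergence recalled in~\ref{2.4} we obtain $B_r^G(1_G)\cdot S_{n_m}^{(k)}\to B_r^G(1_G)\cdot S^{(k)}$ in GH-topology. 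The only real subtlety is the order of the subsequence extractions: once a finite-stage coincidence $g\cdot x_{i,n_m}=g'\cdot x_{j,n_m}$ is made permanent along a subsequence it is witnessed by a distance that is identically zero and therefore persists in the limit; beyond that the argument is a finite computation.
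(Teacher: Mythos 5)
Your proposal is correct, and the overall construction is the same as the paper's: along the subsequence from Step~2 of Proposition~\ref{ghconv} you send $g\cdot x_{i,n_m}\mapsto g\cdot x_i$ and read off the distortion bound from the convergence $d_{X_{n_m}}(g\cdot x_{i,n_m},g'\cdot x_{j,n_m})\to d_X(g\cdot x_i,g'\cdot x_j)$ over the finitely many pairs in $B_r^G(1_G)\times\{1,\dots,N_k\}$. Where you genuinely diverge is on the one point the paper itself flags as ``crucial,'' namely well-definedness of the assignment. The paper handles it by a case analysis on whether $g\cdot x_{i,n}$ lies in $S_n^{(k)}$ for infinitely many $n$ (extracting a subsequence along which $g\cdot x_{i,n_u}=x_{j,n_u}$ for a fixed $j$, so that $g\cdot x_i=x_j$ in the limit, and otherwise invoking Lemma~\ref{gpactionconv}), which as written only addresses coincidences of the form $g\cdot x_{i,n}=x_{j,n}$. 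You instead stabilize the entire coincidence relation $\sim_m$ on the finite set $P$ by pigeonhole and observe that a coincidence persisting along the whole subsequence is witnessed by identically zero distances, hence passes to a zero distance in the limit metric space $X$. Your version covers all coincidences $g\cdot x_{i,n_m}=g'\cdot x_{j,n_m}$ uniformly and is, if anything, the more complete treatment of the point; the paper's route has the side benefit of establishing the pointwise convergence $\{g\cdot x_{i,n}\}\xrightarrow{\{n_m\}}g\cdot x_i$ in the sense of Remark~\ref{limitqi}, which is not needed for the statement of the lemma itself.
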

\begin{proof} Using the same argument as given in Step 2 of the proof of Proposition \ref{ghconv}, after passing through a
subsequence, we have 
$$\{g\cdot x_{i,n}\}_{n\in \mathbb{N}}\xrightarrow{\{n\}_{n\in \mathbb{N}}} g\cdot x_i\hspace*{1mm}\mbox{and}\hspace*{1mm}\lim_{n\rightarrow\infty} 
d_{X_n}(g\cdot x_{i,n}, g'\cdot  x_{j,n})=d_X(g\cdot x_i, g'\cdot x_j),$$
for all $i,j\in\{1,\ldots, N_k\}$ and for all $g, g'\in B_r^G(1_G)$. 
We fix $g\in G$ and $i\in \{1,\ldots, N_k\}$. If $g\cdot x_{i,n}\in S_n^{(k)}$ for 
infinitely many $n\in\mathbb{N}$, we find a subsequence 
$\{n_u\}_{u\in\mathbb{N}}$ and $x_{j, n_u}\in S_{n_u}^{(k)}$ such that $g\cdot x_{i,n_u}=x_{j,n_u}$ for some $j\in \{1,\ldots, N_k\}$
and for all $u\in\mathbb{N}$. Therefore, we have 
$$\{g\cdot x_{i,n_u}=x_{j,n_u}\}_{u\in \mathbb{N}}\xrightarrow{\{n_u\}_{u\in \mathbb{N}}} g\cdot x_i=x_j.$$
If this is not the case, we obtain another subsequence $\{n_v\}_{v\in\mathbb{N}}$ such that 
$g\cdot x_{i, n_v}\notin S_{n_v}^{(k)}$ for all $v\in\mathbb{N}$. By Lemma \ref{gpactionconv}, we obtain a subsequence
$\{n_w\}_{w\in\mathbb{N}}$ of $\{n_v\}_{v\in\mathbb{N}}$ such that 
$$\{g\cdot x_{i,n_w}\}_{w\in \mathbb{N}}\xrightarrow{\{n_w\}_{w\in \mathbb{N}}} g\cdot x_i.$$ 
Applying the above procedure inductively on the elements of $g\in B_r^G(1_G)$ and $i\in\{1,\ldots, N_k\}$, we obtain a subsequence $\{n_m\}_{m\in\mathbb{N}}$ 
such that 
$$\{g\cdot x_{i,n}\}_{n\in \mathbb{N}}\xrightarrow{\{n_m\}_{m\in \mathbb{N}}} g\cdot x_i\hspace*{1mm}\mbox{and}
\hspace*{1mm}\lim_{m\rightarrow\infty}d_{X_{n_m}}(g\cdot x_{i,n_m}, g'\cdot  x_{j,n_m})=d_X(g\cdot x_i, g'\cdot x_j),$$
for all $g, g'\in B_r^G(1_G)$ and for all $i, j\in \{1,\ldots, N_k\}$. We define  
$f_m^{(r,k)}: B_r^G(1_G)\cdot S_{n_m}^{(k)}\rightarrow B_r^G(1_G)\cdot S^{(k)}$
by mapping $g\cdot x_{i,n_m}\mapsto g\cdot x_i$. This is crucial to observe that $f_m^{(r,k)}$ is a well-defined map. Now, since 
$d_{X_{n_m}}(g\cdot x_{i,n_m}, g'\cdot  x_{j,n_m})\xrightarrow{m\rightarrow\infty}d_X(g\cdot x_i, g'\cdot x_j)$, therefore $f_m^{(r,k)}$ is an $\epsilon_m$-isometry
for some $\epsilon_m>0$, where $\epsilon_m\rightarrow 0$ as $m\rightarrow\infty$. 
\end{proof}

\bigskip

\textit{Proof of Proposition \ref{gequifn}:}
Using Lemma \ref{gequiviso} and Cantor's diagonal procedure, we obtain a subsequence $\{n_k\}_{k\in\mathbb{N}}$ and $\epsilon_k$-isometry 
$f_k: B_k^G(1_G)\cdot S_{n_k}^{(k)}(\subset X_{n_k}) \rightarrow B_k^G(1_G)\cdot S^{(k)}(\subset X)$ such that $\epsilon_k\rightarrow 0$ as 
$k\rightarrow\infty$. Without loss of generality, we assume that $\epsilon_k>1/k$ for all $k\in\mathbb{N}$. 
Fix $k\in\mathbb{N}$ and $g\in B_k(1_G)$. We extend $f_k$ on $X_{n_k}$ in the following way (we denote the extended map by the same symbol $f_k$): 
Let $x\in X_{n_k}\setminus B_k^G(1_G)\cdot S_{n_k}^{(k)}$. Then, there exists $x_{i,n_k}\in S_n^{(k)}$ such that  $d_{X_{n_k}}(x, x_{i,n_k})<1/k$. 
We define $f_k(x):=f_k(x_{i,n_k})$. It is easy
to observe that the extended map $f_k$ is a $3\hspace*{0.5mm}\epsilon_k$-isometry.
Let $\delta_k>0$ be the number corresponding to $3\hspace*{0.5mm}\epsilon_k$ obtained by Lemma \ref{equicont}. Therefore, 
$d_{X_{n_k}}(g\cdot x, g\cdot x_{i,n_k})<\delta_k$ and $d_X \big(g\cdot f_k(x), g\cdot f_k(x_{i,n_k})<\delta_k$. 
Since $f_k$ is a $3\hspace*{0.5mm}\epsilon_k$-isometry, $d_X(f_k(g\cdot x),  f_k(g\cdot x_{i,n_k})) < 3\hspace*{0.5mm}\epsilon_k+\delta_k$. 
Now, we have the following inequality:
\begin{eqnarray*}
d_X\big(g\cdot f_k(x), f_k(g\cdot x)\big) & \leq & d_X \big(g\cdot f_k(x), g\cdot f_k(x_{i,n_k})\big)\\
                                                      &  & + d_X\big(g\cdot f_k(x_{i,n_k}), f_k(g\cdot x_{i,n_k})\big)\\
                                                      &  & + d_X\big(f_k(g\cdot x_{i,n_k}), f_k(g\cdot x)\big).
\end{eqnarray*}
By Lemma \ref{gequiviso}, $d_X\big(g\cdot f_k(x_{i,n_k}), f_k(g\cdot x_{i,n_k})\big)=0$. Therefore, we obtain that 
\begin{equation}\label{g-equi-fn}
 d_X\big(g\cdot f_k(x), f_k(g\cdot x)\big)< 3\hspace*{0.5mm}\epsilon_k+2 \delta_k.
\end{equation}
Hence, we have our proposition by taking $\xi_k=3\hspace*{0.5mm}\epsilon_k+ 2 \delta_k$. \hfill\(\Box\)

\begin{cor}\label{gequiinv}
Using the notations of Proposition \ref{gequifn}, we have
$$d_{X_{n_k}}^{Haus}\big(g\cdot f_k^{-1}(A) ,f_k^{-1}( g\cdot A)\big)\leq 2\hspace*{0.5mm}\xi_k$$ 
for all $g\in B_k^G(1_G)$ and for all subsets $A$ of $X$, where $d_{X_{n_k}}^{Haus}$ denotes the Hausdorff distance between two subsets 
in $X_{n_k}$. 
\end{cor}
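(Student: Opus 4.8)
The plan is to deduce the statement purely formally from Proposition \ref{gequifn}, using that $f_k$ is a $\xi_k$-isometry (hence, in particular, that it does not contract distances by more than $\xi_k$) together with the bound $\sup_{x}d_X\big(g\cdot f_k(x), f_k(g\cdot x)\big)<\xi_k$ for $g\in B_k^G(1_G)$. Fix such a $g$ and a subset $A\subseteq X$; I must bound the Hausdorff distance in $X_{n_k}$ between $g\cdot f_k^{-1}(A)$ and $f_k^{-1}(g\cdot A)$, where $f_k^{-1}(B):=\{x\in X_{n_k}\mid f_k(x)\in B\}$ denotes the full preimage.

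First I would show that every point of $g\cdot f_k^{-1}(A)$ is within $2\xi_k$ of $f_k^{-1}(g\cdot A)$. Take $y=g\cdot z$ with $z\in f_k^{-1}(A)$, so $f_k(z)\in A$. Since $f_k$ is a $\xi_k$-isometry, $f_k$ is $\xi_k$-dense in $X$, so there is a point $w\in X_{n_k}$ with $d_X\big(f_k(w), g\cdot f_k(z)\big)$ small; more to the point, I want $w$ with $f_k(w)\in g\cdot A$. Here is the cleaner route: apply the almost-equivariance inequality at the point $z$, giving $d_X\big(g\cdot f_k(z), f_k(g\cdot z)\big)<\xi_k$, i.e. $d_X\big(g\cdot f_k(z), f_k(y)\big)<\xi_k$. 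Now $g\cdot f_k(z)\in g\cdot A$ since $f_k(z)\in A$. I still need an actual preimage point in $g\cdot A$ rather than merely a point near $g\cdot A$; so instead I treat $y$ itself: $f_k(y)$ is within $\xi_k$ of the point $g\cdot f_k(z)\in g\cdot A$. Thus $y$ is an $\xi_k$-approximate preimage of $g\cdot A$. To convert this to a genuine preimage, note that the $\xi_k$-density of $f_k(X_{n_k})$ in $X$ means $g\cdot f_k(z)=f_k(w)$ for some $w$ up to error $\xi_k$; combining and using the non-contracting bound $d_X(f_k(y),f_k(w))\geq d_{X_{n_k}}(y,w)-\xi_k$ then yields $d_{X_{n_k}}(y,w)\leq 2\xi_k$ with $f_k(w)=g\cdot f_k(z)\in g\cdot A$, hence $w\in f_k^{-1}(g\cdot A)$. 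This gives one inclusion up to $2\xi_k$.

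For the reverse direction, take $w\in f_k^{-1}(g\cdot A)$, so $f_k(w)=g\cdot a$ for some $a\in A$. Since $f_k$ is $\xi_k$-dense, pick $z\in X_{n_k}$ with $d_X(f_k(z),a)<\xi_k$; then $d_X(f_k(z),a)$ small and $a\in A$ shows $z$ is within $2\xi_k$ (via the non-contracting property again) of $f_k^{-1}(A)$, hence $g\cdot z$ is within $2\xi_k$ of $g\cdot f_k^{-1}(A)$ once we know the $g$-action on $X_{n_k}$ is nonexpanding on the relevant scale — but in general it is only equicontinuous (Lemma \ref{equicont}), so I would instead run the symmetric version of the first paragraph's argument directly: from $d_X\big(g\cdot f_k(z), f_k(g\cdot z)\big)<\xi_k$ and $d_X(g\cdot f_k(z), g\cdot a)$ controlled, deduce $f_k(g\cdot z)$ is close to $f_k(w)$, then use non-contraction of $f_k$ to get $d_{X_{n_k}}(g\cdot z, w)\leq 2\xi_k$ with $g\cdot z\in g\cdot f_k^{-1}(A)$. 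Taking the sup over both directions gives $d_{X_{n_k}}^{Haus}\big(g\cdot f_k^{-1}(A), f_k^{-1}(g\cdot A)\big)\leq 2\xi_k$.

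The main obstacle I anticipate is bookkeeping the constants so that the final bound is exactly $2\xi_k$ and not, say, $3\xi_k$: one must be careful to use the almost-equivariance estimate $d_X(g\cdot f_k(z), f_k(g\cdot z))<\xi_k$ on one side and the non-contraction estimate $d_{X_{n_k}}(p,q)\leq d_X(f_k(p),f_k(q))+\xi_k$ on the other, rather than chaining three separate $\xi_k$-errors; the density of $f_k(X_{n_k})$ in $X$ should be invoked only to produce a genuine preimage point and not to incur an extra additive error. If matching the precise constant $2\xi_k$ proves awkward, the cleanest fix is to re-inspect whether each of the two estimates can be applied so that their errors overlap rather than add, which is exactly what the structure "$f_k(g\cdot z)$ is $\xi_k$-close to a point of $g\cdot A$, and $f_k$ distorts distance by at most $\xi_k$" delivers.
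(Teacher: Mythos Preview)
Your two ingredients are exactly right --- the almost-equivariance bound $d_X\big(g\cdot f_k(x),f_k(g\cdot x)\big)<\xi_k$ from Proposition~\ref{gequifn} and the distortion bound $d_{X_{n_k}}(p,q)\le d_X\big(f_k(p),f_k(q)\big)+\xi_k$ from the $\xi_k$-isometry property --- and your final paragraph even says so. The problem is the detour through $\xi_k$-density. Density only gives you a $w$ with $d_X\big(f_k(w),g\cdot f_k(z)\big)\le\xi_k$; it does \emph{not} give $f_k(w)=g\cdot f_k(z)$, so you cannot conclude $w\in f_k^{-1}(g\cdot A)$. (And even granting $w$, you would have stacked three $\xi_k$'s --- density, equivariance, distortion --- and ended at $3\xi_k$, which is the bookkeeping worry you yourself flagged.) The same issue recurs in your reverse direction: picking $z$ with $f_k(z)$ close to $a$ does not put $z$ in $f_k^{-1}(A)$.

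The paper sidesteps all of this by never manufacturing a preimage point. It works fiberwise: fix $y\in A$, take $z=g\cdot x\in g\cdot f_k^{-1}(\{y\})$ and $z'\in f_k^{-1}(\{g\cdot y\})$. Then $f_k(z')=g\cdot y=g\cdot f_k(x)$ \emph{exactly}, and $f_k(z)=f_k(g\cdot x)$, so almost-equivariance gives $d_X\big(f_k(z),f_k(z')\big)<\xi_k$, and one application of the distortion bound yields $d_{X_{n_k}}(z,z')\le 2\xi_k$. Taking the union over $y\in A$ gives both Hausdorff inclusions at once with the sharp constant. The point is that $z'$ is not produced from density but is simply an arbitrary element of the other set; once you have it, its image under $f_k$ lands \emph{exactly} at $g\cdot f_k(x)$, and the two errors combine additively to $2\xi_k$ with no slack.
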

\begin{proof}

Fix $y\in A$. Let $z=g\cdot x\in g\cdot f_k^{-1}(\{y\})$ and $z'\in f_k^{-1} (\{g\cdot y\})$. Since $f_k$ is a $\xi_k$-isometry, we have
$$d_{X_{n_k}}(z,z')\leq \xi_k + d_X\big(f_k(z), f_k(z')\big).$$
We observe that $f_k(z)=f_k(g\cdot x)$ and $f_k(z')=g\cdot f_k(x)$. Now, using equation \ref{g-equi-fn}, we obtain that 
$d_{X_{n_k}}(z,z')\leq 2\hspace*{0.5mm}\xi_k$. Hence, we have our corollary. 
\end{proof}

\bigskip

\textit{Proof of Proposition \ref{measureconv}:}

By Proposition \ref{gequifn}, we obtain a subsequence $\{n_k\}_{k\in\mathbb{N}}$ and $\xi_k$-isometries $f_k:X_{n_k}\rightarrow X$ for all $k\in\mathbb{N}$
such that $sup_{x\in X_{n_k}}\hspace*{0.5mm}d_{X_{n_k}}\big(g\cdot f_k(x), f_k(g\cdot x)\big)\leq \xi_k$
for all $g\in B_k^G(1_G)$, where $\xi_k\rightarrow 0$ as $k\rightarrow\infty$. For our convenience, we take $\mu_n$ as the uniform measure on $X_n$.
For all $k\in\mathbb{N}$, we define $\widetilde{\mu_k}:=f_k^{*}(\mu_{n_k})$,
the pushforward measure of $\mu_{n_k}$ on $X$ by $f_k$ . We consider the space of all probability measures on $X$ with weak* topology, which we denote by 
$\mathcal{P}(X)$. By Banach-Alaoglu theorem, $\mathcal{P}(X)$ is compact 
in weak* topology. Therefore, there exists a subsequence of $\{\widetilde{\mu_k}\}_{k=1}^\infty$ which converges to a probability measure, say $\mu$, on $X$.
Without loss of generality, we denote the subsequence by the same notation $\{\widetilde{\mu_k}\}_{k=1}^\infty$. We will prove that $\mu$ is $G$-invariant. 

It is a known fact from (\cite{Gr2}, \cite{DV} p. 398) that the weak* topology on $\mathcal{P}(X)$ is metrizable and the metric is given by the following 
Prokhorov metric: $d_{P}^{X}(\lambda,\nu):= inf\{\eta>0 | \lambda(A)\leq \nu(A^{\eta})+\eta \hspace*{2mm}\mbox{and} \hspace*{2mm}\nu(A)\leq
\lambda(A^{\eta})+\eta\}$, where $A^\eta$ is the $\eta$-neighborhood of $A$. Since the $\sigma$-algebra of $X$ is generated by the countable number of 
clopen subsets of $X$, it suffices to prove $g\cdot \mu(A)=\mu(A)$ for all clopen subsets $A$ of $X$. We fix a clopen subset $A$ of $X$ and $g\in G$. 
There exists $k_0\in\mathbb{N}$ such that $g\in B_k^G(1_G)$ for all $k\geq k_0$. Since $A$ is a clopen set, $A^\eta=A$ for sufficiently small $\eta$.
Now, from the definition of $g\cdot\mu$ and $\mu$, we get
\begin{equation}
(g\cdot \mu)(A)=\mu (g^{-1}\cdot A)= \lim_{k\rightarrow\infty}\widetilde{\mu_k}\big(g^{-1}\cdot A\big)
=\lim_{k\rightarrow\infty}\mu_{n_k}\big(f_k^{-1}([g^{-1}\cdot A])\big). 
\end{equation}

Now, using Corollary \ref{gequiinv}, we have
\begin{equation}
 f_k^{-1}( g^{-1}\cdot A)\subseteq [g^{-1}\cdot f_k^{-1}(A)]^{2\hspace*{0.5mm}\xi_k}
\end{equation}
By Lemma \ref{equicont}, corresponding to each number $2\hspace*{0.5mm}\xi_k$, we obtain a positive number $\delta_k$ tending to zero as 
$k\rightarrow\infty$ such that
\begin{equation}
 g\cdot [g^{-1}\cdot f_k^{-1}(A)]^{2\hspace*{0.5mm}\xi_k}\subseteq [f_k^{-1}(A)]^{\delta_k}
\end{equation}
Since $\mu_{n_k}$ is $G$-invariant we have 
\begin{equation}
  \mu_{n_k}\big([g^{-1}\cdot f_k^{-1}(A)]^{2\hspace*{0.5mm}\xi_k}\big)=\mu_{n_k}\big(g\cdot [g^{-1}\cdot f_k^{-1}(A)]^{2\hspace*{0.5mm}\xi_k}\big)
\end{equation}
Using the fact that $f_k$ is a $\xi_k$-isometry, we obtain that 
\begin{equation}
 [f_k^{-1}(A)]^{\delta_k}\subseteq f_k^{-1}(A^{\delta'_k}),
\end{equation}
where $\delta'_k:=\xi_k+\delta_k$ for all $k\in\mathbb{N}$. 
Now, using the above set-containments and equations (4.3), (4.4), (4.5), (4.6) and (4.7),  we obtain that 
$$\mu_{n_k}\big(f_k^{-1}(g^{-1}\cdot A)\big)\leq\mu_{n_k}\big(f_k^{-1}(A^{\delta'_k})\big).$$
Finally, using equation (4.2), we get 
$$(g\cdot \mu)(A)=\lim_{k\rightarrow\infty}\mu_{n_k}\big(f_k^{-1}(g^{-1}\cdot A)\big)\leq 
\lim_{k\rightarrow\infty}\mu_{n_k}\big(f_k^{-1}(A^{\delta'_k})\big)=\mu(A).$$
Applying the same argument for $g^{-1}$ and $A$, we obtain our proposition, i.e., $(g\cdot\mu)(A)=\mu(A)$ for $g\in G$ and for all Borel subsets $A$ in $X$.
\hfill\(\Box\)

\subsection*{Proof of the main theorem \ref{mainthm}:}\label{subsection4.3}

\hspace*{\fill} 

\vspace{2mm}

\textbf{Part I.} We first prove that if the box spaces are coarsely equivalent, then the groups are UME. We prove it in the following steps. 

\bigskip

\textit{Step 1:} Define $Z=H X (\subseteq W)$. Since the $H$-action on $W$ is proper and $X$ is compact, $Z$ is a closed subset of $W$. 
Clearly, $Z$ is a $G$ and $H$ invariant subset of $W$.
Therefore, we can induce the $G$ and $H$ actions from 
$W$ to $Z$. We also induce the subspace topology from $W$ to $Z$. $Y$ is a compact-open subset of $W$ and $Y\cap Z=X$. So, we obtain that 
$X_H:=X$ is a compact-open fundamental domain of $Z$ under the induced topology. 

\bigskip

\textit{Step 2:} We give the $G$-invariant probability measure $\mu$ on $X$ as constructed in Subsection 4.2 (Proposition \ref{measureconv}) and 
extend this measure on $Z$ by translating $\mu$ by 
the action of $H$. We denote the extended measure by the same symbol $\mu$. 
Therefore, we obtain a $G$ and $H$-invariant measure $\mu$ on $Z$. 

\bigskip

\textit{Step 3:} Now, we will construct a compact-open fundamental domain $X_G$ of $G$. We will follow the construction given in \cite{Sh} 
(Theorem 2.1.2, p 131). We define $E_{h}:=\{\psi\in Z:\psi(1_G)=h\}$ and  
$K_h:=G  E_{h}=\{\psi\in Z:\psi\hspace*{1mm}\mbox{takes the value}\hspace*{1mm} h\}$. Now, we enumerate the elements of $H$ by  
$h_0=1_H, h_1, h_2,\ldots$ and define 
$$X_G:= E_{1_H}\cup_{i=1}^{\infty}\big( E_{h_i}\cap K_{h_{i-1}}^c\cap\ldots\cap K_{1_H}^c\big).$$
Since $E_{1_H}=X_H$, we have $X_H\subset X_G$. We refer the reader \cite{Sh} (Theorem 2.1.2, p 131) for the proof of the fact that $X_G$ is 
a compact-open fundamenatal domain of $G$ in $Z$.

\bigskip

\textit{Step 4:} Now, it remains to show that $g$-translate of $X_H$ can be covered by finitely many $H$-translates of $X_H$ for $g\in G$ and 
$h$-translate of $X_G$ can be covered by finitely many $G$-translates of $X_G$ for $h\in H$. This easily follows from the fact that $X_G$ and $X_H$ are 
compact-open in $Z$.

By Lemma A.1 in \cite{BFS}, the composition of two UME's is a UME. Since $H\times M$ and $H$ are commensurable, 
therefore there is a UME between $G$ and $H$. 

\bigskip

\textbf{Part II.} In this part, we prove that if there is a coarse embedding of one box space into another box space, then there is a UME-embedding
of the first group into the second one. Since the proof of this part will be almost same as Part I, we will give a brief sketch of it. We will
be using same notations from the previous part. Here, we consider $Y$ as 
$\{f:G\rightarrow H|f \hspace*{1mm}\hspace*{1mm}\mbox{is a}\hspace*{1mm}\hspace*{1mm} (\rho_{+},\rho_{-})-\mbox{coarse embedding} \hspace*{1mm}\hspace*{1mm}
\mbox{and}\hspace*{1mm}\hspace*{1mm} f(1_G)=1_H\}$ and $X_n$ as $\{f:G/G_n\rightarrow H/H_n| f\hspace*{1mm}\hspace*{1mm} \mbox{is a}\hspace*{1mm}
\hspace*{1mm}$ $(\rho_{+},\rho_{-})-\mbox{coarse embedding and} \hspace*{1mm} f(1_{G/G_n})=1_{H/H_n}\}$. 
We can use a similar result like Proposition \ref{ghconv} to show that there exists a subsequence of $\{X_n\}_{n\in\mathbb{N}}$ which converges to a 
$G$-invariant compact subspace $X$ of $Y$ in Gromov-Hausdorff topology. Similarly, we can give a $G$-invariant measure $\mu$ on $X$. We define the coupling 
space as $Z=H X$ with $X_H:=X$ as the fundamental domain for $H$. Since $X_H$ is compact-open in $Z$, each $g$-translate of $X_H$
can be covered by finitely many $H$-translates of $X_H$ for all $g\in G$. Again using Lemma A.1 in \cite{BFS}, we conclude
that there is a UME-embedding from $G$ into $H$. \hfill\(\Box\)

\begin{rem}\label{fingp}
With the assumptions of our theorem, we obtain from the proof that there exists a UME between $G$ and $H\times M$ (resp. UME-embedding from $G$ into $H\times M$) 
with $X_{H\times M}\subseteq X_G$  for some finite group $M$. 
 
\end{rem}

\begin{rem}\label{margp}
The theorem \ref{mainthm} can be generalized in the setting of amenable `marked groups' in the following sense: 
Let $\{G_n\}_{n\in\mathbb{N}}$ and $\{H_n\}_{n\in\mathbb{N}}$ be two sequences of finitely generated amenable marked groups which converge to 
the finitely generated groups $G$ and $H$ respectively. Suppose there exists $(\rho_{+},\rho_{-}, c)$-coarse equivalence from $G_n$ to $H_n$ for all $n$,
where $(\rho_{+},\rho_{-}, c)$ is independent of $n$. Then, $G$ and $H$ are uniformly measured equivalent (see \cite{Da}). 

\end{rem}

\section{Applications}

The coarse embedding has been mainly studied for embedding locally finite countable metric spaces with bounded geometry inside a Hilbert space 
in the context of Baum-Connes conjecture. It arises a natural question of studying the coarse embedding inside the category of countable locally 
finite metric spaces with bounded geometry. The coarse-equivalence or quasi-isometry has been extensibly studied in geometric group theory 
among the Cayley graphs of finitely generated groups. But, we do not know much about the coarse embedding of a Cayley graph inside another Cayley graph. 
Shalom has discussed some of the results in this direction in \cite{Sh}. 

In this section, we discuss some questions of coarse equivalence and coarse embedding among box spaces. A. Naor and M. Mendel \cite{MN} first construct 
two expander sequences so that there exists no coarse embedding of one expander sequence into the other one. In their examples, one expander sequence 
has unbounded girth and another one has many short cycles. Later, D. Hume has given existence of a continuum of expander sequences with unbounded girth 
\cite{Hu}. Khukhro-Valette obtains \cite{KV} examples of another such family with bounded girth: one family is a box space of $SL_n(\mathbb{Z})$ ($n\geq 3$), a group with Property (T),  and another family is a box space of 
$SL_2(\mathbb{Z}[\sqrt{p}])$ ($p$ is a prime), a group with Haagerup property and  Property $\tau$. Using part (ii) of Corollary \ref{affinecohdim}
we prove that there is no coarse embedding of the box spaces of $SL_n(\mathbb{Z})$ into the box spaces of $SL_m(\mathbb{Z})$, where $n>m$ and $n,m\geq 3$.
Some considerations of non coarsely equivalent box spaces can be found in \cite{AG}.

\bigskip

\textbf{Proof of Corollary \ref{affinecohdim}:}

\bigskip

\textit{Proof of (i)}:  Suppose there is a coarse embedding of a box space of $G$ into a box space of $H$. Then, by Theorem \ref{mainthm},
there exists a UME-embedding of $G$ into $H$ with a compact fundamental domain $X_H$ of $G$. Suppose, $H$ has a metrically proper affine action $\alpha$ on
some $L^p(Y)$, $1\leq p\leq\infty$. We define the induced representation on $L^p(X_H; L^p(Y))\equiv L^p( X_H\times Y)$ with the induced cocycle as  
$t'(g)(x):= t(c(g,x))$ where $t$ is the 1-cocyle associated to the affine action $\alpha$ and $c:G\times X_H\rightarrow H$ is the cocycle associated 
with the $(G,H)$-coupling space $(\Omega,\mu)$. We have 
$$\|t'(g)\|=\big(\int_{X_G} ||t(c(g,x))||^p d\mu(x)\big)^{1/p}<\infty$$   
Now, we also have $\rho_{-}(|g|)\leq |c(g,x)| \leq \rho_{+}(|g|)$ for all $x\in X_H$. Therefore, $\|t'(g)\|\geq \big(\mu(X_H)\big)^{1/p} 
\|t\big(\rho_{-}(g)\big)\|$.
Since $\rho_{-}$ is a proper function and $t$ is a proper 1-cocyle,  $t'$ is a proper 1-cocyle too. Hence, we have our corollary. \hfill\(\Box\)

\bigskip

\textit{Proof of (ii):} By Theorem \ref{mainthm} and Remark \ref{fingp}, there exist a finite group $M$ and a UME-embedding of $G$ into $H\times M$ with a compact fundamental domain 
$X_{H\times M}$ of $H\times M$, which is inside the fundamental domain $X_G$ of $G$. Since $cd_R(H\times M)=cd_R H$, without loss of generality, we can 
replace $H\times M$ by $H$. Now, we can use the same argument as given in the proof of Theorem 1.5 in \cite{Sh} to conclude that $cd_R G\leq cd_R H$. 
\hfill\(\Box\)

\bigskip

\subsection{Distinguishing box spaces up to coarse embedding}

We are now ready to discuss the questions of coarse embedding among the box spaces of the following classes of groups. Some of the already known examples 
can be deduced from Corollary \ref{affinecohdim}.

\bigskip 

\textbf{I. $SL_n(\mathbb{Z})$ and $SL_m(\mathbb{Z})$, where $n>m$ and $n,m\geq 3$:}
 
 Using part (ii) of Corollary \ref{affinecohdim}, we get that if there exists a coarse embedding of a box space of $SL_n(\mathbb{Z})$ inside $SL_m(\mathbb{Z})$, then
 $cd_R SL_n(\mathbb{Z}) \leq cd_R SL_m(\mathbb{Z})$. But, by a result of Borel-Serre \cite{BS}, we get explicit values of cohomological dimension of 
 these groups with coeffcients in the ring $\mathbb{Q}$ : $cd_\mathbb{Q} SL_n(\mathbb{Z})=dim N$, where $N$ is set the upper triangular 
 unipotent matrices
 which appears in the Iwasawa decomposition of $SL_n(\mathbb{R})$. Therefore, there exists no coarse embedding of the expanders obtained from 
 $SL_n(\mathbb{Z})$ into the expanders obtained from $SL_m(\mathbb{Z})$, where $n>m$. 
 
 This also can be proved in the following way. By a remark, given in Subsection 6.1 of \cite{Sh}, if there exists a coarse embedding of 
 $SL_n(\mathbb{Z})$ into $SL_m(\mathbb{Z})$, $cd_{\mathbb{Q}} SL_n(\mathbb{Z})\leq cd_{\mathbb{Q}} SL_m(\mathbb{Z})$. We recall the argument briefly: 
 The intersection of $N$ with $SL_n(\mathbb{Z})$, denoted by $N(\mathbb{Z})$, is coarsely embedded in $SL_n(\mathbb{Z})$.
 Now, by Theorem 1.5 in \cite{Sh}, if $\Lambda$ is amenable and if $\Lambda$ coarsely embeds in $\Gamma$, then $cd_{\mathbb{Q}}\Lambda\leq cd_{\mathbb{Q}}\Gamma$.
 However, $cd_{\mathbb{Q}} N(\mathbb{Z})=cd_{\mathbb{Q}} SL_n(\mathbb{Z})=dim N$. Now, by using Proposition \ref{coarseequi},
 we have the above result.

\bigskip
 
 \textbf{II. Groups with Property T and  groups with Haagerup Property:} The groups with Property (T) have the property that 
 any affine isometric action of these groups on a Hilbert space has a fixed point. But, the groups with Haagerup property have metrically proper 
 affine isometric action on a Hilbert space. Therefore, the part (i) of Corollary \ref{affinecohdim}
  easily distinguishes the box spaces of these two groups up to coarse embedding.  This also can be proved by using the following result :
 a finitely generated, residually finite group has Haagerup property if and only if one (or equivalently, all) of its box spaces admits a fibred coarse
 embedding into a Hilbert space \cite{CWW}. This argument has been used by Khukhro-Valette in \cite{KV} to show that there exists no coarse 
 embedding of the box spaces of $SL_n(\mathbb{Z})$, where $n\geq 3$, into the box spaces of $SL_2(\mathbb{Z}[\sqrt{p}])$, where $p$ is a prime.

\bigskip

\textbf{III. Lattices of $SL_n(\mathbb{R})$ ($n\geq 3$) and a finite product of hyperbolic groups:} 
 
By \cite{BFGM}, the lattices in $SL_n(\mathbb{R})$ have Property $F_{L^p}$, where $1<p<\infty$, i.e., any affine isometric action of these groups 
on an $L^p$-space  ($1<p<\infty$) has a fixed point. On the other hand, by \cite{Yu2}, hyperbolic groups admit a metrically proper affine isometric action on an
$l^p$-space for some $2\leq p <\infty$. Therefore, by part (ii), there is no coarse embedding of the box spaces of lattices in $SL_n(\mathbb{R})$
inside the box spaces of a finite product of hyperbolic groups. This also can be argued by using a result about `fibred coarse embedding inside an $L^p$-space', 
where $1\leq p\leq\infty$:  a finitely generated, residually finite group has a proper affine action on an $L^p$-space if and only if one 
(or equivalently, all) of its box spaces admits a fibred coarse embedding into an $L^p$-space \cite{Ar}. In particular, since the lattices in 
$Sp(n,1)$ ($n\geq 2$) are  
hyperbolic groups  with Property (T), using our result we obtain that 
there is no coarse embedding of the expander sequences coming from $SL_n(\mathbb{Z})$ ($n\geq 3$) into the expanders coming from the lattices of 
$Sp(n,1)$  ($n\geq 2$).

\bigskip

\subsection{Distinguishing box spaces up to coarse equivalence:}
In this subsection, we show a countable class of coarsely equivalent groups such that the box spaces of two such groups 
are not coarsely equivalent.
The examples of residually finite groups which are quasi-isometric but not measured equivalent will serve our purpose. 
The following class of examples has been suggested by D. Gaboriau: Let $\Gamma_{p,q,r}:=(F_p\times F_q)\ast F_r$ , where $F_p$, $F_q$ and $F_r$ are 
free groups with $p$, $q$ and $r$ generators, respectively, and $p, q, r\geq 2$.

\begin{cor}\label{notqiboxsp}
 There exists a countable class of residually finite groups from the collection 
 $$\{\Gamma_{p,q,r} | (p,q,r)\in\mathbb{N}\times\mathbb{N}\times\mathbb{N} \}$$
 where any two groups are coarsely equivalent but any of their box spaces are not coarsely equivalent. 
\end{cor}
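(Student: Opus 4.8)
The plan is to isolate, inside the family $\{\Gamma_{p,q,r}\}$, one coarse-equivalence class of residually finite groups whose $\ell^2$-Betti numbers vary, and then apply Corollary \ref{l2betti} to separate their box spaces. Concretely I would take
$$\Gamma_n := \Gamma_{2,2,n} = (F_2\times F_2)\ast F_n,\qquad n\geq 2.$$
This is a countable subfamily of $\{\Gamma_{p,q,r}\}$; each $\Gamma_n$ is finitely generated, and it is residually finite since direct products and free products of residually finite groups are residually finite.

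First I would check that the groups $\Gamma_n$ are pairwise coarsely equivalent. Each $\Gamma_n$ is a nontrivial free product with infinitely many ends whose only one-ended Grushko factor is $F_2\times F_2$ (the remaining factors being copies of $\mathbf{Z}$), and no $\Gamma_n$ is virtually free, since it contains $\mathbf{Z}^2$. By the quasi-isometry classification of free products (the theorem of Papasoglu and Whyte), two such groups are quasi-isometric as soon as their one-ended Grushko factors agree up to quasi-isometry; hence all the $\Gamma_n$ are quasi-isometric to one another, and for finitely generated groups this coincides with coarse equivalence (Subsection \ref{2.3}).

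Next I would compute the $\ell^2$-Betti numbers. From $\beta_i^{(2)}(F_k)=k-1$ for $i=1$ and $0$ otherwise, the Künneth formula gives $\beta_i^{(2)}(F_2\times F_2)=1$ for $i=2$ and $0$ otherwise; the free-product formulas for infinite groups, $\beta_1^{(2)}(A\ast B)=\beta_1^{(2)}(A)+\beta_1^{(2)}(B)+1$ and $\beta_i^{(2)}(A\ast B)=\beta_i^{(2)}(A)+\beta_i^{(2)}(B)$ for $i\geq2$, then yield $\beta_i^{(2)}(\Gamma_n)=n$ for $i=1$, $\beta_i^{(2)}(\Gamma_n)=1$ for $i=2$, and $0$ otherwise. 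For $n\neq m$ the sequences $(\beta_i^{(2)}(\Gamma_n))_i$ and $(\beta_i^{(2)}(\Gamma_m))_i$ are not proportional: scaling by a constant and comparing in degree $2$ forces the constant to be $1$, and then degree $1$ forces $n=m$.

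Finally I would assemble the pieces: if some box space of $\Gamma_n$ were coarsely equivalent to some box space of $\Gamma_m$ with $n\neq m$, then by Corollary \ref{l2betti} the $\ell^2$-Betti numbers of $\Gamma_n$ and $\Gamma_m$ would be proportional, contradicting the previous step; hence no box space of $\Gamma_n$ is coarsely equivalent to any box space of $\Gamma_m$ for $n\neq m$, while all the $\Gamma_n$ are coarsely equivalent. (Equivalently, the obstruction can be phrased through Theorem \ref{mainthm} and Gaboriau's theorem \cite{Ga}: the $\Gamma_n$ are quasi-isometric but pairwise non-measured-equivalent.) The only genuinely nontrivial input is the quasi-isometry statement for the groups — verifying that changing the rank of the free factor $F_n$ does not move $\Gamma_n$ out of a fixed quasi-isometry class; the $\ell^2$-Betti bookkeeping and the appeal to Corollary \ref{l2betti} are routine.
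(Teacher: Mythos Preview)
Your argument is correct and follows the same strategy as the paper: exhibit a subfamily of the $\Gamma_{p,q,r}$, show they are all quasi-isometric, compute that $\beta_1^{(2)}=r$ and $\beta_2^{(2)}=(p-1)(q-1)$ (your case $p=q=2$ gives $\beta_1^{(2)}=n$, $\beta_2^{(2)}=1$), observe these are not proportional for distinct parameters, and conclude via Corollary~\ref{l2betti} (equivalently Theorem~\ref{mainthm} plus Gaboriau).

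The only substantive difference is in how quasi-isometry of the $\Gamma_{p,q,r}$ is justified. The paper argues in one line from the commensurability of all nonabelian free groups, implicitly using that passing to commensurable free factors does not change the quasi-isometry type of the free product. You instead invoke the Papasoglu--Whyte classification of infinitely-ended groups, identifying $F_2\times F_2$ as the unique one-ended Grushko factor. Your route is heavier machinery but makes the step fully explicit; the paper's route is more elementary in spirit but leaves the reader to supply why commensurability of the pieces forces quasi-isometry of $(F_p\times F_q)\ast F_r$ with $(F_2\times F_2)\ast F_2$. Either way the conclusion is the same, and the $\ell^2$-Betti number computation and the appeal to Corollary~\ref{l2betti} are identical.
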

\begin{proof}

Since all free groups with at least two generators are commensurable, the groups $\Gamma_{p,q,r}$ for different $(p,q,r)$ are quasi-isometric. 
Using Properties 1.5 and Example 1.6 of \cite{Ga} (p. 12, 13), we compute the first and second $l^2$-betti numbers of this group:
$\beta_1^{(2)}[(F_p\times F_q)\ast F_r]=r$ and  $\beta_2^{(2)}[(F_p\times F_q)\ast F_r]=(p-1)(q-1)$. By Corollary \ref{l2betti}, the betti numbers of 
$G$ and $H$ must be proportional. Therefore, we can obtain a countable class of residually finite groups from 
$\{\Gamma_{p,q,r} | (p,q,r)\in\mathbb{N}\times\mathbb{N}\times\mathbb{N} \}$ which are not mutually measured equivalent. Now, 
using Theorem \ref{mainthm}, we obtain that any two box spaces of two diffrent groups from the abovementioned class are not coarsely equivalent.

\end{proof}

\begin{rem}
The above corollary shows that Theorem \ref{mainthm} is stronger than Proposition \ref{coarseequi}, i.e., there are examples of groups whose box spaces can
not be distinguished up to coarse-equivalence by \ref{coarseequi} but can be distinguished by \ref{mainthm}. 
\end{rem}

\section{Some questions}

 The author does not know \textit{`whether there exist two residually finite groups $\Gamma$ and $\Lambda$ such that there exists a coarse embedding 
of $\Gamma$ into $\Lambda$ but there exists no UME-embedding of $\Gamma$ into $\Lambda$'}. If such examples exist, this will assure that Theorem \ref{mainthm}
is stronger than Proposition \ref{coarseequi} in terms of distinguishing box spaces up to coarse embedding. 
We suspect that this might be a possible example:  We know from above that there is no coarse embedding of the box spaces of a lattice inside 
a simple Lie group of higher rank, for example $SL_n(\mathbb{Z})$ ($n\geq 3$), into the box spaces of a finite product of hyperbolic groups. But, 
it is not known to the author whether there exists a coarse embedding of $SL_n(\mathbb{Z})$ ($n\geq 3$) into a finite product of hyperbolic groups or into a finite
product of $3$-regular trees. A similar question was asked by Cornulier in \cite{Co} (Question 1.12, p. 5). 

\bigskip

 The author would also like to know whether the converse of the main theorem \ref{mainthm} is true, i.e., 
\textit{is it true that if two residually finite groups are uniform measured equivalent, then there exist box spaces of each of them which are 
coarsely equivalent?}

\end{document}